\newtheorem{teor}{Theorem}[section]
\newtheorem{note}{Note}
\newtheorem{cor}[teor]{Corollary}
\newcommand\F{\mathcal{F}}
\newcommand\laps{(-\Delta)^{s}}
\newcommand\m{\widetilde{m}}
\newcommand\mh{\widehat{m}}
\newcommand\mo{\mathring{m} }
\newcommand\no{\mathring{n} }
\newcommand\s{\widetilde{s}}
\newcommand\so{\mathring{s} }
\newcommand\sh{\widehat{s}}
\newcommand\FT{\mathcal{F}}
\newcommand\betau{\overline{\beta}}
\newcommand\alphau{\overline{\alpha}}
\numberwithin{equation}{section}
\newcommand{\RN}{\mathbb{R}^N}
\newcommand{\ren}{\mathbb{R}^N}
\begin{document}

\title{\bf Transformations of Self-Similar Solutions  \\ for porous medium equations of  fractional type}

\author{\bf Diana Stan, F\'elix del Teso  and Juan Luis V\'azquez}

\maketitle

\begin{abstract}
We consider four different models of nonlinear diffusion equations involving fractional Laplacians
and study the existence and properties of classes of self-similar solutions. Such solutions are an important
tool in developing the general theory. We introduce a number of transformations that allow us to map complete classes of solutions of one equation into those of another one, thus providing us with a number of new solutions, as well as interesting connections. Special attention is paid to the property of finite propagation.
\end{abstract}



\section{Introduction}
We will consider the following models of  evolution equations of diffusive type involving at the same type fractional Laplacian operators and power nonlinearities:
\[
u_t+\laps u^m=0, \quad \tag{M1}\label{FPME}
\]
\[
v_t=\nabla \cdot(v^{\m-1}\nabla(-\Delta)^{-\s}v ), \quad \tag{M2}\label{Model2}
\]
\[
w_t=\nabla \cdot(w\nabla(-\Delta)^{-\sh}w^{\mh-1} ),\quad \tag{M3}\label{Model3}
\]
as well as the more general version
\[
z_t=\nabla(z^{\mo-1}\nabla (-\Delta)^{-\so} z^{\no-1}). \quad \tag{MG}\label{ModelGen}
\]
 For brevity we will call them \eqref{FPME}, \eqref{Model2}, \eqref{Model3} and \eqref{ModelGen}, where M stands for Model and G for General.  Here, $(-\Delta)^{s}$ is the fractional Laplacian operator, $0<s<1$, with Fourier symbol $|\xi|^{2s}$, and $(-\Delta)^{-s}$ its inverse, cf. \cite{landkof, Stein1970}. Due to the power nonlinearities we refer to them as fractional diffusion equations of porous medium type. See details about current research  concerning Model 1 in \cite{PQRV1,PQRV2,BV2012}, cf. \cite{StanTesoVazquezCRAS,StanTesoVazquez} for Model 2, and \cite{Biler1,Biler2} for Model 3, as well as the survey papers \cite{VazSurveyFractional1,VazSurveyFractional2}. When $\widetilde{m}=\hat{m}=2$, then \eqref{Model2} and \eqref{Model3} coincide and the problem becomes $v_t=\nabla \cdot(v\nabla(-\Delta)^{-\s}v )$. Existence, finite propagation and self-similarity for this problem were recently studied in \cite{CaffVaz, CaffVaz2, CaffSorVaz}.

The behavior of the solutions of the three basic models  may be very different depending both on the equation and on the parameters $m, \m$ and $\mh$. An  efficient way of studying such differences is via the existence and properties of special solutions having particular symmetries since such solutions are either explicit or semi-explicit, or at least can be analyzed in great detail; the interest is also due to the fact that they are important in describing the properties of much wider classes of solutions.  This applies in particular to the class of self-similar solutions, namely, solutions of the types
$$
u(x,t)=t^{-\alpha}\phi(xt^{-\beta})
$$
 (so-called type I), or
 $$
 u(x,t)=(T-t)^{\alpha}\phi(x\,(T-t)^{-\beta})
 $$
 (type II). The importance of self-similar solutions in the areas of PDEs and Applied Mathematics is attested in a wide literature, cf. Barenblatt's monograph \cite{Barbk96} or \cite{JLVSmoothing}.

 The present paper is concerned with the existence, properties and correspondences of self-similar solutions of the four fractional diffusion models presented above. Important progress has been done recently in the study of self-similar solutions of these models. The  questions of existence and properties are not an easy task since in principle the solutions are not explicit.  Important questions in the qualitative analysis are the following: (i) the equation satisfied by the profile function, (ii) the decay of the profile, (iii) whether or not there is an explicit expression for $\phi$, (iv) whether the profile is compactly supported or not (finite versus infinite propagation); (v) a crucial question is the relation of these solutions to the general theory, in particular whether the large-time behaviour of a general solution of the equation is given by a self-similar solution. These are difficult questions and there are only partial answers in the literature for some of the models.

Here we  investigate the existence of transformations that enable to pass from self-similar solutions of one of the equations into  self-similar solutions of another equation, thus showing some deep connection between the models, and transferring results from one model to another one. Several coincidences had been observed in the recent literature in particular cases, see Biler et al. \cite{Biler1}, Huang \cite{Huang2013}, V\'azquez \cite{VazMesa}. We will show below that there are general transformations that apply to important classes of self-similar solutions of our models, putting whole ranges of self-similar solutions in one-to-one correspondence. We will devote special attention to the correspondence between models \eqref{FPME} and \eqref{Model2}.

Transformations between whole classes of solutions of different equations can be very useful but they are not frequent in the literature. There are however some well-known examples. Let us mention some of them in the area of nonlinear diffusion:  (i) the Hopf-Cole transformation that maps solutions of the heat equation into solutions of Burgers equation  \cite{Cole51, Hopf50}; (ii) the Lie-B\"acklund transformation that maps solutions of the fast diffusion equation into solutions of the porous medium equation in 1D, \cite{BK80}; (iii) differentiation in space maps solutions of the $p$-Laplacian equation in 1D into solutions of the porous medium equation; (iv) the relationship between these equations has been extended to the whole class of radial solutions in several dimensions by Iagar, Sánchez and Vázquez in \cite{ISV2008}.

In Section \ref{SectionVSSm2} we find explicit very singular solutions for model \eqref{Model2} of two types, in a separate variables form. The first type are solutions which are positive for all times for some values of $\m$ in the supercritical range $\m>(N-2+2\s)/N$, while the second type are solutions that extinguish in finite time for all $\m$ in the subcritical range $0<\m <(N-2+2\s)/N$.  Both types of solutions have a  form algebraically similar to the ones found by Vázquez in \cite{VazquezBarenblattFractPME} and by Vázquez and Volzone in \cite{VazquezVolzone2013Optimal} for model M1.

\normalcolor


\section{Preliminaries on Model 1}\label{SectPrelim}

This problem is probably the best known of the list. The equation is called the \emph{Fractional Porous Equation}, FPME, since it can be considered as the fractional version of the standard Porous Medium Equation $u_t=\Delta u^m$. On the other hand, for $m=1$ and $0<s<1$ we get the linear fractional heat equation, which has been also well studied.

 The existence, uniqueness and continous dependence of solutions of the Cauchy problem \eqref{FPME} for all $m>0$ and $0<s<1$ have been proved by De Pablo, Quirós, Rodríguez and Vázquez in \cite{PQRV1,PQRV2}. The main result of interest here is the property of infinite speed of propagation: Assume $s\in (0,1)$, $m>m_c=(N-2s)_+/N$. Then for non-negative initial data $u_0 \ge 0$, $\int_{\ren} u_0 (x)\,dx<\infty$, there exists a unique solution $u(x,t)$ of problem \eqref{FPME} satisfying $u(x,t)>0$ for all $x\in \mathbb{R}^N$, $t>0$. Moreover, mass that is conserved, $\int u(x,t)\,dx= \int u_0(x)\,dx$  for all $t>0$.

 The long term behaviour of such solutions is described by the self-similar solutions of type I with finite mass (Barenblatt solutions) constructed in \cite{VazquezBarenblattFractPME}, where it is shown that the equation admits a family of self-similar solutions, also called Barenblatt type solutions, of the form
$$
u(x,t)=t^{-N\beta_1}\phi_1(y), \quad y=x\,t^{-\beta_1}\,,
$$
and $\beta_1=1/(N(m-1)+2s)$.  Existence is proved when $m>m_c$, so that $\beta_1$ is well-defined and positive. The extra condition that is used to obtain these solutions is  $\int u(x,t)\,dx=$ constant in time. This formula produces a solution to equation (\ref{FPME}) if the profile function $\phi_1$ satisfies the following equation
\begin{equation}\label{prof1}
\laps \phi_1^m=\beta_1 \nabla \cdot(y \,\phi_1).
\end{equation}
It is proved in the above reference that the profile $\phi_1(y)$ is a smooth and positive function in $\ren$, it is a radial function, it is monotone decreasing in $r=|y|$ and has a definite decay rate as $|y|\to \infty,$ that depends on $m$ a bit, as described below.

 \begin{teor} \label{thm.Bs} For every choice of parameters $s\in(0,1)$ and $m>m_c$ and for every $M>0$, equation \eqref{FPME} admits a unique fundamental solution $u^*_M(x,t)$; it is a nonnegative and continuous weak solution for \ $t>0$ and takes the initial data $M\,\delta(x)$ as a trace in the sense of Radon measures. It has the self-similar form of type I for suitable $\alpha$ and $\beta$ that can be calculated in terms of $N$ and $s$ in a dimensional way, precisely
\begin{equation}\label{scale.expo}
\alpha=\frac{N}{N(m-1)+2s}, \qquad \beta=\frac{1}{N(m-1)+2s}\,.
\end{equation}
The profile function  $\phi_M(r)$, $r\ge 0$, is a  bounded and continuous function, it is positive everywhere, it is monotone and  it goes to zero at infinity.
\end{teor}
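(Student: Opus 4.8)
The plan is to construct the fundamental solution by approximation and then upgrade it to self-similar form using the scaling invariance of \eqref{FPME} together with uniqueness, in the spirit of the classical Barenblatt construction for the porous medium equation adapted to the nonlocal framework. First I would invoke the well-posedness theory of De Pablo--Quir\'os--Rodr\'iguez--V\'azquez recalled above: for $u_0\in L^1(\ren)$, $u_0\ge0$, there is a unique mass-preserving weak solution, the solution map is an $L^1$-contraction, and since $m>m_c$ one has the scaling-invariant smoothing estimate $\|u(\cdot,t)\|_{L^\infty(\ren)}\le C\,t^{-N\beta}\|u_0\|_{L^1(\ren)}^{2s\beta}$ with $\beta=1/(N(m-1)+2s)>0$. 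Given $M>0$, pick $u_{0,n}\ge0$ in $L^1\cap L^\infty$ with $\int_{\ren}u_{0,n}=M$ and $u_{0,n}\rightharpoonup M\delta$ weakly as measures, and let $u_n$ be the corresponding solutions. Mass conservation gives $\|u_n(\cdot,t)\|_{L^1}=M$, the smoothing estimate gives a uniform bound $\|u_n(\cdot,t)\|_{L^\infty}\le C\,M^{2s\beta}t^{-N\beta}$, and the interior regularity and time-equicontinuity estimates of that theory provide compactness in $C_{loc}((0,\infty);L^1_{loc}(\ren))$; a uniform tail (tightness) estimate, obtained from the $L^1$ bound together with a weighted test function, prevents loss of mass at spatial infinity. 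Passing to a subsequence, $u_n\to u^*_M$, a nonnegative continuous weak solution for $t>0$ with $u^*_M(\cdot,t)\rightharpoonup M\delta$ as $t\to0^+$.

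Next, uniqueness: any solution of \eqref{FPME} for $t>0$ taking $M\delta$ as initial trace must coincide with $u^*_M$. I would prove this by an $L^1$-stability argument, comparing two such solutions starting from a positive time $\tau$ at which both are close to $M\delta$ in the narrow topology, using the $L^1$-contraction and the continuity of the semigroup as $\tau\to0$, or else invoking directly the uniqueness for measure data available in the PQRV framework. For the self-similar form, observe that \eqref{FPME} is invariant under $u(x,t)\mapsto \lambda^{N}u(\lambda x,\lambda^{1/\beta}t)$: the exponent $\alpha=N\beta$ is forced by mass conservation and the relation $\alpha(m-1)+2s\beta=1$ by the equation, which together give exactly \eqref{scale.expo}. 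Since this rescaling sends a solution with initial trace $M\delta$ to another such solution, uniqueness forces $u^*_M$ to be invariant under it; choosing $\lambda=t^{-\beta}$ yields $u^*_M(x,t)=t^{-N\beta}\phi_M(xt^{-\beta})$ with $\phi_M(y):=u^*_M(y,1)$, and $\phi_M$ then solves \eqref{prof1}. Boundedness and continuity of $\phi_M$ transfer from the $L^\infty$ and regularity bounds on $u^*_M$, while $\int_{\ren}\phi_M=M<\infty$.

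Finally, the qualitative properties of $\phi_M$: positivity everywhere is the infinite-propagation property valid for $m>m_c$, that is, a strong maximum principle for \eqref{FPME}; radial symmetry and radial monotonicity follow because the initial trace $M\delta$ is radial and radially decreasing and the flow preserves both — e.g. by a moving-plane argument adapted to $\laps$, or a comparison using rotations and symmetric-decreasing rearrangements — so that $\phi_M(r)$ is monotone in $r=|y|$; decay of $\phi_M$ to zero at infinity is then a consequence of $\phi_M\in L^1(\ren)$ together with monotonicity, the precise algebraic rate coming from balancing the two sides of \eqref{prof1} in the far field.

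I expect the main obstacle to be the second and third steps: securing enough compactness to pass to the limit and, above all, proving that the limit genuinely attains the Dirac mass as its trace (tightness and no loss of mass at $t=0$ or at spatial infinity) and that such a solution is unique. In the local porous medium equation these points are dispatched by explicit comparison with Barenblatt profiles, but the nonlocality of $\laps$ destroys comparison with compactly supported barriers, so one must rely instead on the $L^1$-contraction, the scaling-invariant smoothing effect and carefully weighted a priori estimates controlling the tails; this is precisely the technical heart of \cite{VazquezBarenblattFractPME}.
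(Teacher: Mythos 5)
A preliminary remark: the paper does not prove Theorem \ref{thm.Bs} at all --- it is recalled from \cite{VazquezBarenblattFractPME} as background --- so your proposal has to be measured against the argument of that reference. Your overall architecture (approximate $M\delta$ by $L^1$ data, use mass conservation together with the scaling-invariant $L^1$--$L^\infty$ smoothing effect, interior regularity and a tail estimate to pass to the limit; obtain the self-similar form from the invariance $u\mapsto\lambda^{N}u(\lambda x,\lambda^{1/\beta}t)$ combined with uniqueness; get symmetry, monotonicity and positivity from reflection arguments and infinite propagation) is indeed the strategy of that paper, and your derivation of the exponents \eqref{scale.expo} from mass conservation and the scaling relation is correct.

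The genuine gap is the uniqueness step, which is precisely the hinge on which the self-similarity argument turns. Neither of the two mechanisms you offer closes it. The $L^1$-stability argument from a positive time $\tau$ fails because narrow convergence of $u(\cdot,\tau)$ and $v(\cdot,\tau)$ to the same measure $M\delta$ gives no control on $\|u(\cdot,\tau)-v(\cdot,\tau)\|_{L^1}$: two distinct families approximating the Dirac mass can stay at $L^1$-distance of order $2M$ for all small $\tau$, so the contraction inequality $\|u(\cdot,t)-v(\cdot,t)\|_{L^1}\le\|u(\cdot,\tau)-v(\cdot,\tau)\|_{L^1}$ yields nothing as $\tau\to0$. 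And the well-posedness theory of \cite{PQRV1,PQRV2} provides uniqueness only for $L^1$ initial data, not for Radon measures, so there is no measure-data uniqueness to ``invoke directly''. This is exactly why uniqueness of the fundamental solution is a separate, substantive theorem in \cite{VazquezBarenblattFractPME}; it requires a different mechanism (in the spirit of Pierre's comparison-of-concentrations argument for the classical porous medium equation, combined with the rescaling and asymptotic analysis) and cannot be reduced to the contraction property of the semigroup. You correctly flag tightness and attainment of the initial trace as delicate, but the step that would actually break in your sketch is uniqueness. A smaller secondary point: a moving-plane argument for $\laps$ on an evolution problem with measure data is not off-the-shelf; the radial monotonicity of the profile is obtained in the reference via the Aleksandrov reflection principle, which does adapt cleanly to the nonlocal setting.
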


\noindent Moreover, the precise characterization of the profile $\phi_1$ is given by Theorem $8.1$ of \cite{VazquezBarenblattFractPME}.

\begin{teor}\label{ThJLVProfileDecay}
For every $m>m_1=N/(N+2s)$ we have the asymptotic estimate
\begin{equation}\label{decayF}
\lim_{r \rightarrow \infty} \phi_1(r)r^{N+2s}=C_1M^{\sigma},
\end{equation}
where $M=\int \phi_1(r)\,dx$, $C_1=C_1(m,N,s)>0$ and $\sigma=(m-m_1)(N+2s)\beta$. On the other hand, for $m_c<m<m_1$, there is a constant $C_{\infty}(m,N,s)$ such that
\begin{equation}\label{decayF2}
\lim_{r \rightarrow \infty}\phi_1(r) r^{2s/(1-m)}=C_{\infty}.
\end{equation}
\end{teor}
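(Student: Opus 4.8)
The plan is to work directly from the profile equation \eqref{prof1}, $\laps \phi_1^m=\beta_1 \nabla \cdot(y\,\phi_1)$, and exploit the fact that $\phi_1$ is radial, positive, monotone decreasing and integrable. The starting point is the observation that the right-hand side is a divergence, so integrating \eqref{prof1} over a ball and using the radial structure yields a first-order ODE relating $\phi_1$ to a fractional-order quantity: more precisely, one rewrites the equation in the self-similar ``mass'' variable and finds that the total mass outside a ball is controlled by a fractional integral of $\phi_1^m$. This reduces the decay problem to understanding the tail behaviour of $\laps \phi_1^m$, equivalently of the Riesz potential $(-\Delta)^{-s}$ applied to the source term $\nabla\cdot(y\phi_1)$.

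First I would treat the range $m>m_1=N/(N+2s)$. Here the heuristic is that $\phi_1^m$ decays fast enough (like $r^{-m(N+2s)}$, which is integrable against the kernel since $m(N+2s)>N$) that the leading behaviour of $\laps\phi_1^m$ at infinity is governed by the nonlocal kernel acting on a nearly-integrable mass, giving $\laps\phi_1^m(r)\sim c\,r^{-(N+2s)}\int \phi_1^m$. Plugging this ansatz $\phi_1(r)\approx A\,r^{-(N+2s)}$ back into \eqref{prof1} and matching constants forces $A=C_1 M^\sigma$ with $\sigma=(m-m_1)(N+2s)\beta$; the exponent $\sigma$ arises precisely from the scaling relation between $M=\int\phi_1$ and the amplitude, using $\beta=\beta_1$. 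The rigorous version replaces ``$\approx$'' with upper and lower barriers: one constructs supersolutions and subsolutions of the profile equation of the form $c_\pm r^{-(N+2s)}$ for $r$ large, uses the comparison principle available for \eqref{FPME} (equivalently for the elliptic profile equation), and squeezes. One then upgrades the two-sided bound to an exact limit by a scaling/compactness argument: rescaled tails $\lambda^{N+2s}\phi_1(\lambda r)$ are precompact and any limit must solve a limiting (local, linear-looking) identity forcing it to be the constant $C_1M^\sigma$.

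For the complementary range $m_c<m<m_1$ the mechanism is different: now $\phi_1^m$ is \emph{not} integrable near infinity at the rate predicted above, so the tail is instead fixed by a local balance in \eqref{prof1} — the convective term $\beta_1\nabla\cdot(y\phi_1)$ balances $\laps\phi_1^m$ in a self-consistent power law. Setting $\phi_1(r)\sim C_\infty r^{-2s/(1-m)}$ one checks that both sides scale the same way (this is exactly the exponent making $\laps\phi_1^m$ and $\nabla\cdot(y\phi_1)$ homogeneous of the same degree), and $C_\infty$ is determined by a fixed-point/consistency equation for the fractional Laplacian of the model power $r^{-2ms/(1-m)}$; the value is independent of $M$ because the power $2s/(1-m)>N$ range degenerates — mass no longer sets the amplitude. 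Again the proof is made rigorous by barrier construction at the exact critical power and a comparison argument, together with a Harnack-type or scaling argument to pass from inequalities to the precise limit.

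The main obstacle, in both ranges, is controlling the nonlocal term $\laps\phi_1^m$ near infinity with enough precision: unlike the local PME, the value of $\laps\phi_1^m(r)$ for large $r$ depends on \emph{all} of $\phi_1$, so one must carefully split the defining singular integral into a near-field part (where $\phi_1^m$ is bounded and the kernel integrable) and a far-field tail, and show the far-field contribution is genuinely lower order — this is delicate exactly at the threshold $m=m_1$, which is why the statement bifurcates there. A secondary difficulty is justifying the comparison principle at the level of the stationary profile equation (rather than the parabolic problem) and ensuring the barriers are admissible supersolutions/subsolutions globally, not just in the asymptotic regime; this typically requires matching the outer power-law barriers to the known interior positivity and boundedness of $\phi_1$ on a large compact set, using the smoothness asserted after \eqref{prof1}.
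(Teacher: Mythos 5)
The first thing to say is that the paper does not prove this statement at all: it is quoted verbatim as Theorem~8.1 of \cite{VazquezBarenblattFractPME} and used as a known input (it feeds into the Corollary following Theorem~\ref{Thm1}), so there is no internal proof to compare yours against. Your outline must therefore be judged on its own, and it does correctly reconstruct the broad strategy of the cited reference: far-field analysis of the nonlocal term, the dichotomy at $m_1$ where $m(N+2s)=N$ (integrability of $\phi_1^m$ at infinity), power-law barriers, and a scaling argument to identify the constant. The diagnosis of the two exponents and of why the statement bifurcates at $m=m_1$ is right.

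However, as a proof the proposal has two genuine gaps. First, the step ``uses the comparison principle available for \eqref{FPME} (equivalently for the elliptic profile equation)'' is not innocuous: no comparison principle is established for the stationary profile equation \eqref{prof1} itself, and in the reference the super/subsolutions are constructed and compared at the level of the \emph{parabolic} problem after the self-similar change of variables; you would need to say in which problem your barriers live, verify they are admissible globally (not only for large $r$), and justify why comparison applies to the nonlocal operator there. Second, and more seriously, passing from two-sided bounds $c_-\le r^{N+2s}\phi_1(r)\le c_+$ to the exact limit \eqref{decayF} with the stated constant is the hard part, and your ``precompactness of the rescaled tails'' argument does not by itself force the limit to be a constant, let alone the constant $C_1M^{\sigma}$. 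Note also that the natural amplitude produced by matching $\laps\phi_1^m(r)\sim -c\,r^{-(N+2s)}\int\phi_1^m$ against $\beta_1\nabla\cdot(y\phi_1)$ involves $\int\phi_1^m$, not $M=\int\phi_1$; converting one into the other via the scaling group of the equation is precisely where $\sigma=(m-m_1)(N+2s)\beta$ comes from, and this step should be made explicit rather than asserted. Until those two points are filled in, what you have is a correct plan rather than a proof.
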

The case $m=m_1$ has a logarithmic correction. The profile $\phi_1$ has the upper bound
\begin{equation}\label{decayF3}
\phi_1(r) \leq C r^{-N-2s+\epsilon}, \quad \forall r >0
\end{equation}
for every $\epsilon>0,$ and the lower bound
\begin{equation}\label{decayF4}
\phi_1(r) \geq C r^{-N-2s}\log r, \quad \text{ for all large }r.
\end{equation}

As a consequence, the asymptotic behavior of general solutions of the Cauchy Problem for equation \eqref{FPME} is represented by such special solutions as described in Theorem 10.1 from \cite{VazquezBarenblattFractPME}.

\begin{teor}\label{ThAsympFPMEjlv}
 Let $u_0=\mu \in \mathcal{M}_+(\RN)$, let $M = \mu(\RN)$ and let $B_M$ be the self-similar Barenblatt solution with mass $M$. Then we have
\begin{equation}
\lim_{t \rightarrow \infty}|u(x,t)-B_M(x,t;M)|=0
\end{equation}
and the convergence is uniform in $\RN$.
\end{teor}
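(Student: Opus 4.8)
The plan is to use the four--step scaling method (Kamin and V\'azquez), which in fact produces the sharper statement that the \emph{rescaled} orbit of $u$ converges uniformly to the Barenblatt profile, the assertion above being a consequence. Recall from \eqref{scale.expo} that $\beta=1/(N(m-1)+2s)>0$. Equation \eqref{FPME} is invariant under the mass--preserving rescaling
\begin{equation*}
u\longmapsto u_\lambda(x,t):=\lambda^{N\beta}\,u(\lambda^{\beta}x,\lambda t),\qquad\lambda>0,
\end{equation*}
and the fundamental solution $B_M$ of mass $M$ (Theorem \ref{thm.Bs}) is a fixed point, $(B_M)_\lambda=B_M$. Since $u_\lambda(x,1)=\lambda^{N\beta}u(\lambda^{\beta}x,\lambda)$ and $B_M(x,1)=\lambda^{N\beta}B_M(\lambda^{\beta}x,\lambda)$, it suffices to show that $\|u_\lambda(\cdot,1)-B_M(\cdot,1)\|_{L^\infty(\RN)}\to0$ as $\lambda\to\infty$; undoing the rescaling then gives $\|u(\cdot,t)-B_M(\cdot,t)\|_{L^\infty(\RN)}\to0$ as $t\to\infty$, hence the claim.

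First I would gather the bounds on the rescaled solutions that are uniform in $\lambda$, available from the theory of \cite{PQRV1,PQRV2} and \cite{VazquezBarenblattFractPME}: conservation of mass $\|u_\lambda(\cdot,t)\|_{L^1(\RN)}=M$; the smoothing effect $\|u_\lambda(\cdot,t)\|_{L^\infty(\RN)}\le C(N,m,s)\,M^{\gamma}\,t^{-N\beta}$; a scale--invariant interior H\"older estimate, so that $\{u_\lambda\}$ is equicontinuous on compact subsets of $\RN\times(0,\infty)$; and a tail estimate of the form $\int_{|x|>R}u_\lambda(x,t)\,dx\le\eta(R\,t^{-\beta})$ with $\eta(A)\to0$ as $A\to\infty$. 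By Arzel\`{a}--Ascoli, any sequence $\lambda_n\to\infty$ has a subsequence with $u_{\lambda_n}\to U$ locally uniformly on $\RN\times(0,\infty)$; the tail estimate, for $t$ bounded below, upgrades this to convergence in $L^1(\RN)$, so that $\|U(\cdot,t)\|_{L^1(\RN)}=M$ for every $t>0$.

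Next I would identify the limit. Using the uniform $L^\infty$ and $L^1$ bounds to pass to the limit in the weak formulation of \eqref{FPME} (in particular in the term $u_{\lambda_n}^m$) shows that $U$ is a nonnegative weak solution for $t>0$. That $U$ takes the initial data $M\delta_0$ follows by combining $\|U(\cdot,t)\|_{L^\infty(\RN)}\le Ct^{-N\beta}$ with the bound $\int_{|x|>R}U(x,t)\,dx\le\eta(R\,t^{-\beta})$: letting $t\to0^+$ with $R$ fixed shows the mass of $U(\cdot,t)$ concentrates at the origin, whence $U(\cdot,t)\rightharpoonup M\delta_0$. By the uniqueness of the fundamental solution of mass $M$ (Theorem \ref{thm.Bs}), $U=B_M$. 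The limit being independent of the subsequence, the whole family $u_\lambda$ converges to $B_M$ locally uniformly and in $L^1(\RN)$; finally, equicontinuity of $\{u_\lambda(\cdot,1)\}$ together with the uniform spatial decay upgrades the convergence at $t=1$ to uniform convergence on all of $\RN$.

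I expect the soft steps---the scaling algebra, the stability of the weak formulation, and the uniqueness input (already quoted as Theorem \ref{thm.Bs})---to be routine, and the real work to lie in the a priori estimates of the first step: verifying that the regularity theory of \cite{PQRV1,PQRV2} provides H\"older moduli genuinely uniform under the rescaling, and, above all, establishing the tail estimate $\int_{|x|>R}u_\lambda(x,t)\,dx\le\eta(R\,t^{-\beta})$ that rules out loss of mass at infinity. This is the point at which the fine decay of the Barenblatt profile from \cite{VazquezBarenblattFractPME} (Theorem \ref{ThJLVProfileDecay}) is decisive: it is what forces the limit $U$ to be the fundamental solution with the full mass $M$ rather than one of smaller mass. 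A variant that lightens part of the argument is to reduce first, via the $L^1$--contraction of the solution semigroup and density, to nonnegative data $u_0\in L^1(\RN)\cap L^\infty(\RN)$ with compact support, deduce $L^1$--compactness of the rescaled orbit directly from mass concentration, pass to the limit, and only at the end convert $L^1$--convergence into uniform convergence by the smoothing effect and equicontinuity; but the tightness question resurfaces there as the need to control the $L^1$ tails uniformly in $\lambda$, so it is not genuinely avoided.
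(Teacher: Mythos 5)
The paper does not prove Theorem \ref{ThAsympFPMEjlv}: it is quoted without proof as Theorem 10.1 of \cite{VazquezBarenblattFractPME}, so there is no internal argument to compare against. Your outline is precisely the four-step scaling method (mass-preserving rescaling under which $B_M$ is a fixed point, scale-invariant a priori and tail estimates, compactness, identification of the limit through the uniqueness of the fundamental solution in Theorem \ref{thm.Bs}) by which that reference establishes the result, and you correctly locate the real work in the uniform regularity and the tightness estimate ruling out loss of mass at infinity.
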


More delicate properties of general solutions to problem \eqref{FPME} have been proved recently: a priori estimates, quantitative bounds on positivity and Harnack estimates by Bonforte and Vázquez \cite{BV2012}, a priori estimates derived by Schwartz symmetrization technique by Vázquez and Volzone in \cite{VazquezVolzone2012,VazquezVolzone2013Optimal}, and numerical computations by Teso and Vázquez in \cite{Teso,TesoVaz}.

A main practical question that remains partially open is to determine if the profile $\phi_1$ can be expressed as an explicit or semi-explicit function of $r=|x|$ (and the parameters $s$ and $N$). The answer is yes in the special case $m=1$ where the solution is explicit for $s=1/2$, semi-explicit otherwise. Recently, Huang \cite{Huang2013} has shown that for every $s\in (0,1)$ there exists a certain $m_{ex}(s)>m_1$ for which the profile has an explicit expression. More precisely, $m_{ex}(s) = (N+2-2s)/(N+2s)$. For $s=1/2$ we have $m_{ex}(s)=1$, thus recovering the formula of the  linear fractional heat equation.

For $m<(N-2s)/N$, model \eqref{Model2} admits self-similar solutions of type II, as proved by V\'azquez and Volzone in \cite{VazquezVolzone2013Optimal}. Here we will prove an equivalence between \eqref{FPME} with $m>(N-2s)/N$ and \eqref{Model2} with a corresponding $\m$ interval.

\section{Model 2. First correspondence between models }\label{SectSelfSim1}

\subsection{ Preliminaries on Model 2} We call the equation of  \eqref{Model2} the \emph{Porous Medium Equation with Fractional Pressure} since it can be written as $u_t=\nabla(u^{\widetilde{m}-1}\nabla p)$ with the pressure $p=(-\Delta)^{-s}u$.

 (i) The study of the problem has been done by Caffarelli and Vázquez \cite{CaffVaz,CaffVaz2} and also with Soria \cite{CaffSorVaz} in the more natural case $\widetilde{m}=2$. Previous analysis in 1D is due to Biler et al. \cite{Biler1}. It is proved that for non-negative initial data $u_0\ge 0$, $u_0\in L^1(\ren)$, there exists a non-negative solution $u(x,t)\ge 0$.
However, uniqueness of the constructed weak solutions has not been proved but for the case $N=1$.
Moreover, the assumption of compact support on the initial data implies that the same property for all positive times, $u( \cdot,t)$ is compactly supported for all $t>0$. The existence of a self-similar solution that will be responsible for the asymptotic behaviour is obtained in \cite{Biler1} in 1D and in \cite{CaffVaz2} in all dimensions as the solution of a fractional obstacle problem. The explicit formula for this solution was given in \cite{Biler2}, and takes the form
\begin{equation}
v(x,t)=t^{-N//N+2-2s}\Phi(xt^{-1/(N+2-2s)}), \qquad \Phi(y)=(a-b|y|^2)_+^{1-s}
\end{equation}
for suitable constants $a,b>0$.

(ii) The present authors have extended these results for general $m> 1$ in \cite{StanTesoVazquezCRAS} and the preprint \cite{StanTesoVazquez}. In these recent works we prove that for non-negative initial data $u_0\ge 0$, there exists a non-negative solution $u(x,t)\ge 0$.
Different results on the positivity properties have been obtained depending on the parameter $m$ as follows:

- When $N\ge 1$, $s\in (0,1)$, $u_0\ge 0$ compactly supported and $\widetilde{m}\in [2,\infty)$ then the solution $u(x,t)$ is compact supported for all $t>0$, that is the model has finite speed of propagation.

- When $N=1$, $s\in (0,1)$, $\widetilde{m}\in (1,2)$ and $u_0\ge 0$ then the solution satisfies $u(x,t)>0$ a.e. in $\mathbb{R}$, therefore the model has infinite speed of propagation.

\medskip


\subsection{Self-similarity for Model \eqref{Model2}}

We find two main types of self-similar solutions for model \eqref{Model2} depending on the range of the parameter $\m$. The first type are functions that are positive for all times, while second type are functions that extinguish in finite time, separated by a transition type.

\medskip

\noindent $\bullet$ \textbf{Self-similarity of first type. Solutions that exist for all positive times.}
Arguing in the same way as in Model 1, or the case $\widetilde{m}=2$ of Model 2 described above, a self-similar function of the first type $v(x,t)$ is a solution to equation (\ref{Model2}) conserving mass if
\begin{equation}\label{FundSolM2type1}
v(x,t)=t^{-\alpha_2}\phi_2(y), \quad y=x\,t^{-\beta_2}
\end{equation}
with $\alpha_2=N\beta_2$  and $\beta_2=1/(N(\m-1)+2-2\s)$, and  if the profile function $\phi_2$ satisfies the equation
\begin{equation}\label{prof2}
\nabla \cdot(\phi_2^{\m-1}\,\nabla(-\Delta)^{-\s}\phi_2 )=-\beta_2 \nabla \cdot(y \,\phi_2).
\end{equation}
The existence and properties of this family of solutions have not been studied in the literature, but for the work of Huang (\cite{Huang2013}) who has shown the existence of a certain $m(s)$ for each $s \in (0,1)$ for which an explicit solution can be found.

\noindent {\bf Remark.} In the analysis below we find these solutions in the range of parameters where $\beta_2>0$, that is, for $\m>(N-2+2\s)/N$.

\medskip

\noindent $\bullet$ \textbf{Self-Similarity of second type. Extinction in finite time.}
We will also search for  solutions of the second self-similar form
\begin{equation}\label{FundSolM2type2}
v(x,t)=(T-t)^{\alphau_2} \psi_2 \left(y\right), \quad y=x(T-t)^{\betau_2}.
\end{equation}
We make again the choice $\alphau_2=N\betau_2$ even if there can be no justification in terms of mass conservation since the solutions will now extinguish in finite time (the solution to this seeming incompatibility is that the mass will be actually infinite). We use however the rule for a formal consideration: the divergence structure of the resulting profile equation will make the correspondence with Model \eqref{FPME} possible.

Let us determine the value of $\betau_2$ such that $v(x,t)$ solves the equation of \eqref{Model2}. Since
$$
v_t(x,t)=-\betau_2(T-t)^{N\betau_2-1}\nabla \cdot(y \psi_2).
$$
$$\nabla \cdot(v^{\m-1}\nabla(-\Delta)^{-\s}v)= (T-t)^{\betau_2(N\m -2\s +2)}\nabla \cdot(\psi_2^{\m-1}\nabla (-\Delta)^{-\s}\psi_2),$$
we get the compatibility condition
$$
\betau_2=1/(N(1-\m)+2\s-2)\,.
$$
The profile $\psi_2$ has to satisfy the equation
\begin{equation}\label{prof5}
\nabla \cdot(\psi_2^{\m-1}\nabla (-\Delta)^{-\s}\psi_2)=\nabla \cdot(y \psi_2).
\end{equation}

\noindent {\bf Remark.} $\betau_2=-\beta_2$, where $\beta_2$ is the self-similarity exponent of first type. We argue now in the range of parameters where $\betau_2>0$, that is $\m < (N-2+2\s)/N$.

\medskip

\noindent $\bullet$ \textbf{Self-Similarity of third type. Eternal solutions.}
There is a borderline  case  $\m= (N-2+2\s)/N $, which is not included in the previous self-similar solutions. Actually, as  $m\to(N-2+2\s)/N$ we have $1/\beta_2=1/\betau_2\to0$, and therefore self-similar solutions of the first and second type do not apply here. The possibility of self-similar representation comes from the classical porous medium equation  (see \cite{VazquezBarenblattFractPME}) where a third type of self-similar solutions of the form
\begin{equation}\label{FundSolM2type3}
v(x,t)=e^{-\gamma t}F(y), \quad y=xe^{-ct} .
\end{equation}
where $c>0$ is a free parameter (exponential self-similarity, which usually plays a transition role). We choose $\gamma=ct$ in order to have conservation of mass. It is easy to check that
$$
v_t(x,t)=-ce^{Nct}\nabla \cdot(y F ),
$$
$$\nabla \cdot(v^{\m-1}\nabla(-\Delta)^{-\s}v)= e^{-ct(-N\m+2\s-2)}\nabla \cdot(F^{\m-1}\nabla (-\Delta)^{-\s}F).$$
Then, for $m=(N-2+2\s)/N$ we get the following profile equation
\begin{equation}\label{prof6}
\nabla \cdot(v^{\m-1}\nabla(-\Delta)^{-\s}v)=-c\nabla \cdot(y F )
\end{equation}

\noindent {\bf Remark.} Solutions of this type live backwards and forward in time, they are eternal.

\subsection{Equivalence relation}

A main contribution in this paper is to show a relationship that allows to transform the families of mass-conserving self-similar solutions of models \eqref{FPME} and \eqref{Model2} into each other, if suitable parameter ranges are prescribed. Actually, the following theorem states that there exists a precise correspondence between the profiles $\phi_1$ and $\phi_2$, and the parameters $\m$ and $m$, as well as  $\s$ and $s$.

\begin{teor}\label{Thm1}Let $m>(N-2s)/N$, $s\in (0,1)$ and let $\phi_1\ge 0$ be a solution to the profile equation \eqref{prof1}. The following holds:

{\rm (i)} If $m>N/(N+2s)$ then
\begin{equation}\label{formula1}
\phi_2(x)=\left(\beta_1/\beta_2 \right)^{\frac{m}{1-m}} (\phi_1(x))^m
\end{equation}
is a solution to the profile equation \eqref{prof2} if we put $\m=(2m-1)/m$ and $\s=1-s$.

{\rm (ii)} If $m \in ((N-2s)/N, N/(N+2s))$ then
\begin{equation}\label{formula4}
\psi_2(x)=\left(\beta_1/\betau_2 \right)^{\frac{m}{1-m}} (\phi_1(x))^m
\end{equation}
is a solution to the profile equation \eqref{prof5} if we put $\m=(2m-1)/m$ and $\s=1-s$.

{\rm (iii)} If $m=N/(N+2s)$ then
\begin{equation}\label{formula4}
F(x)=\left(\beta_1/c \right)^{\frac{N}{2s}} (\phi_1(x))^{\frac{N}{N+2s}}
\end{equation}
is a solution to the profile equation \eqref{prof6} if we put $\m=(N-2+2\s)/N$ and $\s=1-s$.

\end{teor}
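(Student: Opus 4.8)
The plan is to prove the three parts simultaneously, since the profiles prescribed in \eqref{formula1} and \eqref{formula4} all have the common form $g=\lambda\,\phi_1^{\,m}$ for a positive constant $\lambda$ (in case (iii) the exponent $N/(N+2s)$ is just $m$, because there $m=N/(N+2s)$), and the two prescriptions $\m=(2m-1)/m$ and $\s=1-s$ are dictated, respectively, by the porous-medium nonlinearity and by the fractional pressure. Concretely, I would substitute $g=\lambda\,\phi_1^m$ into the relevant profile equation — \eqref{prof2} in case (i), \eqref{prof5} in case (ii), \eqref{prof6} in case (iii) — use only the profile equation \eqref{prof1} to simplify, reduce both sides to a constant multiple of $\nabla\cdot(y\,\phi_1^m)$, and read off the value of $\lambda$.

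\textbf{Two reductions.} First, $\m=(2m-1)/m$ gives $\m-1=(m-1)/m$, hence $m(\m-1)=m-1$ and $g^{\,\m-1}=\lambda^{\,\m-1}\phi_1^{\,m-1}$: the porous-medium weight collapses to a constant times $\phi_1^{m-1}$. Second, $\s=1-s$ gives $(-\Delta)^{-\s}=(-\Delta)^{s-1}=(-\Delta)^{-1}\laps$, so by \eqref{prof1},
\[
(-\Delta)^{-\s}\phi_1^m=(-\Delta)^{-1}\laps\phi_1^m=\beta_1\,(-\Delta)^{-1}\nabla\cdot(y\,\phi_1).
\]
Using that the profile $\phi_1$ of interest is radially symmetric (cf. Section \ref{SectPrelim}), the field $y\,\phi_1$ is a gradient, $y\,\phi_1=\nabla\Psi$ with $\Psi(r)=\int_0^r\rho\,\phi_1(\rho)\,d\rho$; thus $\nabla\cdot(y\phi_1)=\Delta\Psi$ and, modulo harmonic terms killed by the decay of the pressure, $\nabla(-\Delta)^{-\s}\phi_1^m=\beta_1\nabla(-\Delta)^{-1}\Delta\Psi=-\beta_1\nabla\Psi=-\beta_1 y\phi_1$, i.e.
\begin{equation}\label{keyidentity}
\nabla(-\Delta)^{-\s}\phi_1^m=-\,\beta_1\,y\,\phi_1 ,
\end{equation}
which turns the nonlocal pressure gradient into an explicit local field. (Equivalently: the field $\nabla(-\Delta)^{-(1-s)}\phi_1^m+\beta_1\,y\phi_1$ is curl-free, divergence-free by \eqref{prof1}, and decays, hence vanishes.)

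\textbf{Matching the constant.} Combining the two reductions, for $g=\lambda\,\phi_1^m$ one gets
\[
g^{\,\m-1}\,\nabla(-\Delta)^{-\s}g=\lambda^{\,\m-1}\phi_1^{\,m-1}\cdot\lambda\big(-\beta_1\,y\,\phi_1\big)=-\lambda^{\,\m}\beta_1\,y\,\phi_1^m ,
\]
so $\nabla\cdot\big(g^{\,\m-1}\nabla(-\Delta)^{-\s}g\big)=-\lambda^{\,\m}\beta_1\,\nabla\cdot(y\,\phi_1^m)$, while the right-hand side of the relevant profile equation is $-\kappa\,\nabla\cdot(y\,g)=-\kappa\,\lambda\,\nabla\cdot(y\,\phi_1^m)$, where $\kappa=\beta_2$ in case (i), $\kappa=\betau_2$ in case (ii), and $\kappa=c$ in case (iii). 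Thus $g$ solves the profile equation precisely when $\lambda^{\,\m}\beta_1=\kappa\,\lambda$, i.e. $\lambda^{\,\m-1}=\kappa/\beta_1$; since $\m-1=(m-1)/m$, this is $\lambda=(\beta_1/\kappa)^{m/(1-m)}$ — exactly the constant in \eqref{formula1}, resp. \eqref{formula4} (at $m=N/(N+2s)$ one has $m/(1-m)=N/(2s)$, recovering the exponent there). Finally, $\lambda$ is a genuine positive constant: a short computation gives $\beta_2=\big(N(m-1)/m+2s\big)^{-1}$ and $\betau_2=-\beta_2$ for the indicated parameters, so $\beta_2>0$ exactly when $m>N/(N+2s)$ and $\betau_2>0$ exactly when $m<N/(N+2s)$, matching cases (i) and (ii); together with $\beta_1>0$ for $m>m_c$ and $c>0$ free, one has $\beta_1/\kappa>0$ in every case.

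\textbf{Main obstacle.} The algebra is entirely routine; the only delicate point is making \eqref{keyidentity} rigorous — that $\laps\phi_1^m$ is well defined, that the identity $(-\Delta)^{-\s}=(-\Delta)^{-1}\laps$ may be applied to $\phi_1^m$, and that the harmonic ambiguity of $(-\Delta)^{-1}$ is removed (so that \eqref{keyidentity} holds as an identity between gradients; the pressure itself need not decay in the subcritical range, but only its gradient enters the equations). This is exactly where the sharp decay of $\phi_1$ from Theorem \ref{ThJLVProfileDecay} must be invoked, and it is a good sign that the three ranges of $m$ in the statement — $m>m_1$, $m_c<m<m_1$, and the borderline $m=m_1=N/(N+2s)$ — match the three decay regimes there, with the borderline case (and its logarithmic correction) being the most delicate to handle.
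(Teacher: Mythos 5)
Your proposal is correct and follows essentially the same route as the paper: both hinge on passing from the divergence-form profile equation to the integrated "vector identity" $\nabla(-\Delta)^{-\s}\phi_1^m=-\beta_1 y\,\phi_1$ (the paper phrases this as writing $\laps=-\Delta(-\Delta)^{-\s}$, integrating once and using decay), then multiplying by the weight and matching the constant $\lambda^{\m-1}=\kappa/\beta_1$. Your treatment is somewhat more explicit than the paper's --- you justify the removal of the harmonic ambiguity via the radial/curl-free argument and carry out all three cases uniformly, whereas the paper only writes out case (i) and declares the rest similar --- but the underlying argument is the same.
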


\noindent {\sl Comments.} The first case corresponds to exponents $\beta_1$ and $\beta_2>0$ and
produces new self-similar solutions of \eqref{Model2} are type I, i.\,e., global in time.  We see that $\beta_1>0$ if $m>(N-2s)/N$, while  $\beta_2>0$ if $\m>(N-2+2\s)/N$.
With the relation $\m=(2m-1)/m$, we have $\m>(N-2+2\s)/N$ which is equivalent to $m>N/(N+2s)$. This is another important value in the FPME, identified in \cite{VazquezBarenblattFractPME}, and we have $N/(N+2s)>(N-2s)_+/N$. Therefore, by analyzing the parameters $m$ and $\m$ for which $\beta_1>0$ and $\beta_2>0$ we have to work in the range of parameters  $ m>N/(N+2s)$.

(ii) This option  produces solutions of \eqref{Model2} that extinguish in finite time, starting with solutions of \eqref{FPME} that exist globally in time. This is a remarkable phenomenon of change of behaviour.

\medskip

\begin{proof}
(1) Let us write equation \eqref{prof1} in terms of $\phi_2$, that is, $\phi_1=\left(\beta_2 / \beta_1\right)^{\frac{1}{(1-m)}} \phi_2^{\frac{1}{m}} $, and then
\[
\laps \phi_2=\beta_2 \nabla \cdot(y \, \phi_2^{\frac{1}{m}}).
\]
Now, we pass to the parameters $\m$ and $\s$ defined by
\begin{equation}
m=\frac{1}{2-\m} \quad \text{and}\quad s=1-\s
\end{equation}
and we obtain
\[
-\Delta (-\Delta)^{-\s} \phi_2=\beta_2 \,\nabla \cdot(y \, \phi_2^{2-\m}).
\]
We can express now $\Delta$ as $\nabla \cdot \nabla$, integrate once and use the decay at infinity to transform the previous equation into the vector identity
\[
\nabla (-\Delta)^{-\s} \phi_2 =-\beta_2 \, y \, \phi_2^{2-\m}.
\]
We pass now the term $\phi_2^{\m-1}$ to the left hand side, and finally, assuming regularity on $\phi_2$  and taking divergence in both sides of the equation, we obtain
\[
\nabla \cdot(\phi_2^{\m-1}\,\nabla(-\Delta)^{-\s}\phi_2 )=-\beta_2 \,\nabla \cdot(y \,\phi_2).
\]
The regularity of $\phi_2$ follows from the already proved regularity of $\phi_1$ (\cite{VazquezBarenblattFractPME}) and the correspondence \eqref{formula1}. This is an a posteriori argument. In any case, without using the regularity, $\phi_2$ is already a weak solution of problem \eqref{Model2}.

(2) The proof is similar to the first case.
\end{proof}

\noindent {\bf Remarks.} (i) Relation between the parameters
$$
m\in [1,\infty) \longleftrightarrow \widetilde{m}\in [1,2)
$$
$$
m \in \left(\frac{N}{N+2s},1\right) \longleftrightarrow \widetilde{m} \in \left(\frac{N-2s}{N},1\right)
$$
$$
m \in \left(\frac{N-2s}{N},\frac{N}{N+2s}\right) \longleftrightarrow \widetilde{m} \in \left(\frac{N-4s}{N-2s},\frac{N-2s}{N}\right)
$$
Notice that $m=1$ implies $\m=1$, which is the \emph{Fractional Linear Heat Equation.} Since $\m_c<1$,  some singular cases of equation \eqref{Model2} are covered where $\m<1$. Thus, for $s=1/2$ and $N=2$ we get the whole range $\m\in (0,2)$.

\begin{figure}[h!]
	\begin{center}
		\includegraphics[width=\textwidth]{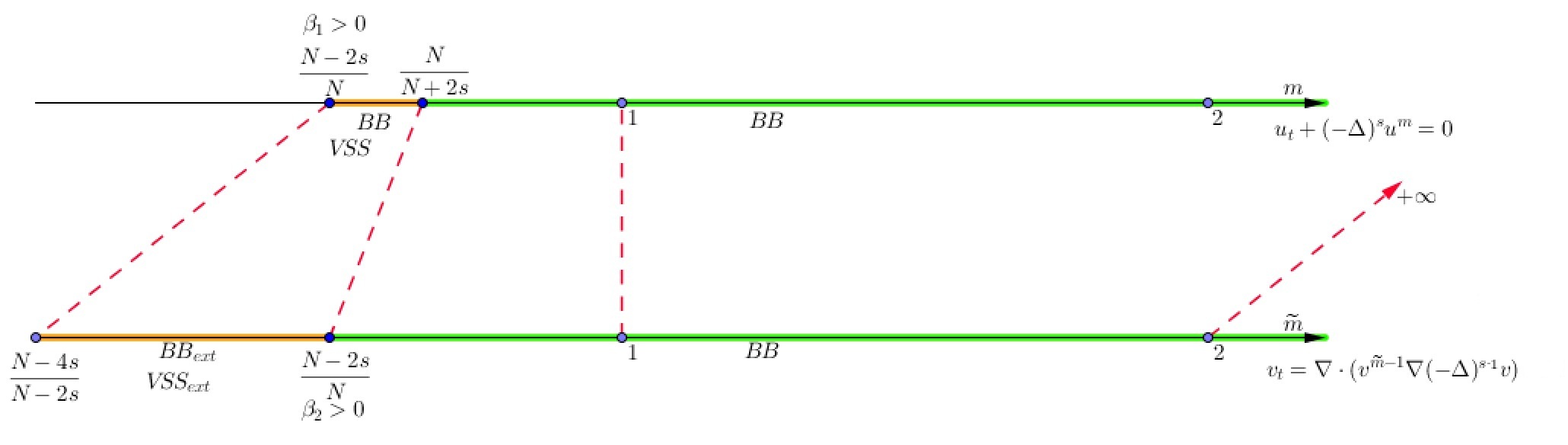}
		\caption{\footnotesize{Related profiles of  \eqref{FPME} and \eqref{Model2}. The picture is drawn for N=2 and $s=\frac{1}{2}$. The notations stand for: BB=Barenblatt solution (type I), $BB_{ext}$=Barenblatt solution with extinction in finite time (type II), VSS=Very Singular Solution, $VVS_{ext}$=Very Singular Solution with extinction in finite time.}}
        		\label{relation1}
	\end{center}
\end{figure}

\noindent(ii) Conversely we can pass from a tripe $(\s,\, \m, \, \phi_2)$ for equation \eqref{Model2} to the corresponding triple $(s, m,  \phi_1)$ for equation \eqref{FPME} through the relation
$$m=1/(2-\m), \quad s=1-\widetilde{s}, \quad \phi_1=\left(\beta_2 / \beta_1\right)^{\frac{1}{(1-m)}} \phi_2^{\frac{1}{m}} .
$$

The following corollary describes the existence ranges and asymptotic behaviour of the self-similar solutions of \eqref{Model2}. It comes as a consequence of our Theorem \ref{Thm1} and the previously known Theorem \ref{ThJLVProfileDecay}.

\begin{cor} (i) For every $\s\in (0,1)$ and $\m\in ((N-2+2\s)/N,2)$ there is a fundamental solution of equation \eqref{Model2}
given by the formula \eqref{FundSolM2type1}. The behaviour at infinity is given by
\begin{equation}
\phi_2(x)\sim C |x|^{-(N+2-2\s)/(2-\m)}.
\end{equation}
(ii) For every $\s\in (0,1)$ and $m\in ((N-4+4\s)/(N-2+2\s),(N-2+2\s)/N)$ there is a finite-time selfsimilar solution of type II  given by the formula \eqref{FundSolM2type2} with the asymptotic behaviour
\begin{equation}
\psi_2(x)\sim C |x|^{-2(1-\s)/(1-\m)}.
\end{equation}
(iii) For every $\s\in (0,1)$ and $\m=(N-2+2\s)/N)$ there is a selfsimilar eternal in time of \eqref{Model2} given by the formula \eqref{FundSolM2type3}. The behaviour at infinity is given by
\begin{equation}
F(x)\sim |x|^{-N}.
\end{equation}
\end{cor}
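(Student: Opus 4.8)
The plan is to derive each of the three parts of the Corollary as a direct translation of Theorem~\ref{ThJLVProfileDecay} through the correspondence established in Theorem~\ref{Thm1}. The underlying principle is that formula~\eqref{formula1} (and its analogues) transports $\phi_1$ to $\phi_2$ (respectively $\psi_2$, $F$) while the parameter dictionary $\m=(2m-1)/m$, i.e. $m=1/(2-\m)$, and $\s=1-s$ transports the ranges and decay exponents. So the proof is largely bookkeeping: substitute, simplify, and verify that the resulting constraints on $\s$ and $\m$ are exactly those stated.

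\medskip
First I would treat part (i). By Theorem~\ref{Thm1}(i), a Barenblatt profile $\phi_1$ of \eqref{FPME} with $m>N/(N+2s)$ produces via \eqref{formula1} a profile $\phi_2$ of \eqref{Model2} with $\m=(2m-1)/m$ and $\s=1-s$. Translating the condition $m>N/(N+2s)$: since $m=1/(2-\m)$ and $s=1-\s$, one computes that $m>N/(N+2s)$ is equivalent to $\m>(N-2+2\s)/N$; the upper bound $\m<2$ corresponds to $m<\infty$. This gives the stated range for part (i). For the decay, Theorem~\ref{ThJLVProfileDecay} gives $\phi_1(r)\sim C_1 M^\sigma r^{-(N+2s)}$ (for $m>m_1=N/(N+2s)$), so $\phi_2=(\beta_1/\beta_2)^{m/(1-m)}\phi_1^m$ behaves like $r^{-m(N+2s)}$. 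It remains to check the arithmetic identity $m(N+2s)=(N+2-2\s)/(2-\m)$: write $N+2s=N+2-2\s$ (since $s=1-\s$) and $m=1/(2-\m)$, which gives exactly $m(N+2s)=(N+2-2\s)/(2-\m)$. Hence $\phi_2(x)\sim C|x|^{-(N+2-2\s)/(2-\m)}$.

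\medskip
For part (ii) I would repeat the same substitution using Theorem~\ref{Thm1}(ii), which applies when $m\in((N-2s)/N,N/(N+2s))$, i.e.\ exactly the subcritical window $m_c<m<m_1$ where \eqref{decayF2} holds: $\phi_1(r)\sim C_\infty r^{-2s/(1-m)}$. Then $\psi_2\sim\phi_1^m$ decays like $r^{-2sm/(1-m)}$, and one verifies $2sm/(1-m)=2(1-\s)/(1-\m)$ using $s=1-\s$ and the relation $m/(1-m)=1/(1-\m)$ — indeed from $m=1/(2-\m)$ one has $1-m=(1-\m)/(2-\m)$, so $m/(1-m)=1/(1-\m)$. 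The range $(N-4+4\s)/(N-2+2\s)<m<(N-2+2\s)/N$ stated in the Corollary is the image under the dictionary of $m\in((N-2s)/N, N/(N+2s))$; this is also consistent with the middle line of the Remarks after Theorem~\ref{Thm1}, so I would just cross-reference that table. Part (iii) is the borderline case: Theorem~\ref{Thm1}(iii) handles $m=N/(N+2s)$, which corresponds to $\m=(N-2+2\s)/N$, and by \eqref{decayF3}--\eqref{decayF4} the profile $\phi_1$ at $m=m_1$ decays like $r^{-N-2s}$ up to a logarithmic factor; raising to the power $N/(N+2s)$ gives $F(x)\sim|x|^{-N\cdot(N+2s)/(N+2s)\cdot 1/\ldots}$, and one checks the exponent collapses to exactly $N$ (the logarithmic correction is absorbed, or should be noted as a log-correction — I would state it the way the authors do, as $F(x)\sim|x|^{-N}$).

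\medskip
The main obstacle, such as it is, is not conceptual but consists in being careful with two points. First, the decay statements in Theorem~\ref{ThJLVProfileDecay} are split at the threshold $m_1=N/(N+2s)$, and one must check that the three parts of Theorem~\ref{Thm1} line up with the three regimes $m>m_1$, $m_c<m<m_1$, $m=m_1$ so that the correct asymptotic formula is used in each case — this is exactly why part (i) uses \eqref{decayF} and part (ii) uses \eqref{decayF2}. Second, one must verify that raising an asymptotic equivalence to a positive power $m$ is legitimate, i.e.\ that $\phi_1(r)r^{N+2s}\to C_1M^\sigma$ implies $\phi_1(r)^m r^{m(N+2s)}\to (C_1M^\sigma)^m$, which is immediate by continuity of $t\mapsto t^m$, and then fold the constant $(\beta_1/\beta_2)^{m/(1-m)}$ into $C$. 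Everything else is the elementary algebra of checking the four identities $m(N+2s)=(N+2-2\s)/(2-\m)$, $2sm/(1-m)=2(1-\s)/(1-\m)$, the equivalence of the parameter ranges, and the collapse to exponent $N$ in the eternal case, none of which I would write out in full.
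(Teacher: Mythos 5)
Your proposal is correct and follows exactly the route the paper intends: the paper offers no written proof of this corollary beyond stating that it "comes as a consequence of Theorem \ref{Thm1} and Theorem \ref{ThJLVProfileDecay}", and your bookkeeping (the dictionary $m=1/(2-\m)$, $s=1-\s$, the exponent identities $m(N+2s)=(N+2-2\s)/(2-\m)$ and $2sm/(1-m)=2(1-\s)/(1-\m)$, and the alignment of the three regimes of Theorem \ref{Thm1} with the three decay regimes $m>m_1$, $m_c<m<m_1$, $m=m_1$) is precisely the omitted verification, carried out correctly.
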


In all  cases the self-similar solutions have positive profiles. This is a partial confirmation that  equation \eqref{Model2} has infinite speed of propagation for all  $\m \in (\m_*,\, 2)$, $\m_*=(N-4s)/(N-2s)<1$. In the limit of this interval of infinite propagation we get the case $\m=2$, i.\,e.,  the equation studied  in \cite{CaffVaz} where finite propagation was established. Concerning general classes of solutions, we have proved infinite propagation in \cite{StanTesoVazquezCRAS} for model \eqref{Model2} for  $\m\in (1,2)$ in dimension 1. Our corollary amounts to a partial result of infinite propagation in all dimensions for a range of $\m$ that goes below 1.


\section{Model 3. Equations with finite propagation}

Now we show a relation between the profile functions for equations when finite speed of propagation is expected. This happens in the third model $\eqref{Model3}$. This model has been studied by Biler, Karch and Monneau in \cite{Biler2} for $\hat{m}=2$ describing dislocation phenomena in 1D, and for general $\hat{m}$ by Biler, Imbert and Karch in \cite{Biler1}. For non-negative initial data $u_0$ with suitable regularity properties there exists a unique weak solution $u(x,t)$ of Problem \eqref{Model3}. If $u_0\ge 0$ then also $u(x,t)\ge 0$ for $t>0$. A further characterization of the support of a general solution $u$ is not known at this point, but particular solutions are found. In \cite{Biler1}, the authors obtain a family of nonnegative explicit compactly supported
self-similar solutions the model.

Let us examine de class of mass-preserving self-similar solutions. We observe that
$$w(x,t)=t^{-N\beta_3}\phi_2(y),\quad y=x\,t^{-\beta_3}
$$
with
$$
\beta_3=1/(N(\mh-1)+2-2\sh)
$$
is a solution to  equation $\eqref{Model3}$ if the profile $\phi_3$ satisfies the equation
\begin{equation}\label{prof3}
\nabla \cdot(\phi_3\,\nabla(-\Delta)^{-\sh}\phi_3^{\mh-1} )=-\beta_3 \nabla \cdot(y\, \phi_3)
\end{equation}
and $w(x,t)$ has the property of mass conservation. Note that in the special case $\mh=2$ the equation
coincides with equation \eqref{Model2} for $\m=2$.

\noindent $\bullet$ \textbf{Case $\m=\mh=2$.} In this particular case, the equation becomes
\begin{equation}\label{Model2m2}
v_t=\nabla(v \nabla (-\Delta)^{-s}v).
\end{equation}
The existence of a family of self-similar solutions of the Barenblatt type for equation \eqref{Model2m2} with compact support in the space variable has been proved independently by Caffarelli and Vázquez in \cite{CaffVaz} and by Biler, Imbert and Karch in \cite{Biler1}. The result proves that the profile $\phi_3=\Phi$ is nonnegative, radially symmetric and compactly supported.  In the latter reference, the authors obtain an explicit formula for the self-similar solution to equation \eqref{Model2m2}  with
$$
\Phi(y)=(a-b|y|^2)_+^{1-s}\,.
$$

\noindent $\bullet$ \textbf{Case $\mh\ge 1$.} Paper \cite{Biler2} considers equation \eqref{Model3} with $\widehat m>1$ for which it presents self-similar solutions with the profile $\phi_3$ given by the explicit formula
$$
\phi_3(y)=\left(k(R^2-|y|^2)_+^{1-s}\right)^{1/(m-1)}.
$$
The regularity is H\"older continuous at the free boundary, $|x|=R\, t^{1/\beta_3}.$

Our present contribution in this instance is to show how this extension is also the result of a direct transformation of self-similarity profiles.

\begin{teor}
Let $\Phi$ be a solution to the profile equation {\rm (\ref{prof2})} with $m=2$, that is,
\begin{equation}\label{profile4}
\nabla \cdot(\Phi\, \nabla(-\Delta)^{-s}\Phi )=-\beta_2 \nabla \cdot(y \, \Phi), \qquad \beta_2= (N+2-2s)^{-1}.
\end{equation}
Let $\mh>1$. Let $\phi_3$ defined by
\begin{equation}\label{formula3}
\phi_3^{\mh-1}= (\beta_2/ \beta_3)\, \Phi.
\end{equation} Then $\phi_3$ is a solution to the profile equation
\begin{equation}\label{profile5}\nabla \cdot(\phi_3\nabla(-\Delta)^{-\sh}\phi_3^{\mh-1} )=-\beta_3 \nabla \cdot(y \,\phi_3),
\end{equation}
with $\sh=s$.
\end{teor}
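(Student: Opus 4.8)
The plan is to mimic exactly the argument used for Theorem~\ref{Thm1}: start from the profile equation \eqref{profile4} for $\Phi$, rewrite it in terms of $\phi_3$ using the substitution \eqref{formula3}, integrate the divergence once to pass to a vector identity, and then multiply by the appropriate power and take the divergence again to recover \eqref{profile5}. Since \eqref{profile4} is the equation for $\Phi$ coming from Model~2 with $\m=2$, and the target equation \eqref{profile5} is the Model~3 profile equation \eqref{prof3} with $\phi_3$ in place of $\phi_2$ and $\sh=s$, the only thing to check is that the nonlinear powers and the constants $\beta_2,\beta_3$ match up correctly.

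First I would write $\Phi=(\beta_3/\beta_2)\,\phi_3^{\mh-1}$ from \eqref{formula3} and substitute into \eqref{profile4}. On the left-hand side, $\nabla\cdot(\Phi\,\nabla(-\Delta)^{-s}\Phi)$ becomes, after pulling the constant $(\beta_3/\beta_2)$ out of the operator $(-\Delta)^{-s}$ (which is linear) and out of the outer factor, a term proportional to $\nabla\cdot(\phi_3^{\mh-1}\nabla(-\Delta)^{-s}\phi_3^{\mh-1})$; on the right-hand side $-\beta_2\nabla\cdot(y\,\Phi)$ becomes proportional to $-\beta_2(\beta_3/\beta_2)\nabla\cdot(y\,\phi_3^{\mh-1})$. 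The key point is that this is \emph{not} yet the equation we want — the outer factor on the left is $\phi_3^{\mh-1}$, not $\phi_3$, and the right side carries $\phi_3^{\mh-1}$, not $\phi_3$. This is where the one-time integration is essential: writing $\Delta=\nabla\cdot\nabla$ as in the proof of Theorem~\ref{Thm1}, I would strip one divergence from both sides (justified by the decay of the compactly supported profile at infinity) to obtain the vector identity
\[
\phi_3^{\mh-1}\,\nabla(-\Delta)^{-s}\phi_3^{\mh-1} \;=\; -\,c\, y\, \phi_3^{\mh-1}
\]
for an explicit constant $c$ depending on $\beta_2,\beta_3,\mh$; cancelling the common factor $\phi_3^{\mh-1}$ gives $\nabla(-\Delta)^{-s}\phi_3^{\mh-1} = -c\,y$. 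Then multiply through by $\phi_3$ and take the divergence again to land on $\nabla\cdot(\phi_3\nabla(-\Delta)^{-\sh}\phi_3^{\mh-1}) = -c\,\nabla\cdot(y\,\phi_3)$, and a bookkeeping check of the constants (using $\beta_2=(N+2-2s)^{-1}$ and $\beta_3=1/(N(\mh-1)+2-2s)$, together with the normalization in \eqref{formula3}) should force $c=\beta_3$, which is precisely \eqref{profile5}.

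**Main obstacle.** I expect the analytic subtlety, rather than the algebra, to be the delicate point: the step of "integrating the divergence once" requires that $\nabla\bigl((-\Delta)^{-s}\Phi\bigr) + \beta_2 y\,\Phi$ be not merely divergence-free but actually zero. For the radial, compactly supported Barenblatt-type profile $\Phi$ this follows because the divergence-free radial vector field that decays at infinity must vanish identically, exactly as invoked in the proof of Theorem~\ref{Thm1} ("integrate once and use the decay at infinity"); I would state this explicitly, appealing to the known radial symmetry and compact support of $\Phi$ from \cite{CaffVaz, Biler1}. A secondary technical point is the regularity needed to take the final divergence and to interpret \eqref{profile5} classically; as in Theorem~\ref{Thm1} this can be handled a posteriori from the explicit formula $\phi_3(y)=\bigl(k(R^2-|y|^2)_+^{1-s}\bigr)^{1/(\mh-1)}$, or else one simply notes that $\phi_3$ is a weak solution of \eqref{Model3}. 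Apart from these points the proof is a direct transcription of the Model~1 / Model~2 argument, and I would keep it correspondingly brief.
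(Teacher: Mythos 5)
Your proposal is correct and follows essentially the same route as the paper's own proof: pass from \eqref{profile4} to the vector identity $\Phi\,\nabla(-\Delta)^{-s}\Phi=-\beta_2\,y\,\Phi$ by integrating the divergence once, substitute $\Phi$ in terms of $\phi_3^{\mh-1}$ via \eqref{formula3}, cancel the common power, multiply by $\phi_3$ and take the divergence to land on \eqref{profile5}. Your additional remarks on why the integrated vector field vanishes (radial symmetry plus decay) and on handling regularity a posteriori are exactly the justifications the paper invokes in the proof of Theorem \ref{Thm1} and carries over here by reference.
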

\begin{proof}
As in the proof of Theorem \ref{Thm1}, from equation (\ref{profile4}) we obtain the vector identity
\begin{equation}\label{formula2}
\Phi\nabla(-\Delta)^{-s}\Phi =-\beta_2 y \Phi. \qquad y\in \RN.
\end{equation}
Let $\phi_3$ given by \eqref{formula3}, $\sh=s$ and $\mh>1$ as given in the statement of the theorem. Then from \eqref{formula2} we obtain
\[
\phi_3^{\mh-1}\,\nabla(-\Delta)^{-\sh} \,\phi_3^{\mh-1} =-\beta_3 \,y \, \phi_3^{\mh-1},
\]
and therefore
\[
\phi_3\,\nabla(-\Delta)^{-\sh}\, \phi_3^{\mh-1} =-\beta_3 \,y\,  \phi_3.
\]
We conclude that $\phi_3$ is a solution the profile equation \eqref{profile5}.
\end{proof}

This theorem proves that self-similar solutions corresponding to parameters $\mh>1$ are reduced to the computation of $\Phi$, the profile function for $\mh=2$. As a consequence of formula \eqref{formula3}, it follows that for $\phi_3=\phi_{3,\mh}$ the profile for $\mh>1$, we have
$$
\text{supp }\phi_{3} =  \text{supp }\Phi, \quad \text{for all }\mh >1.
$$
This means that the propagation of self-similar solutions does not depend on the parameter $\mh$.



\section{A more general Fractional Porous Medium Equation}

In order to see the previous transformations in a more general setting, we  will study in this section the equation
\[
z_t=\nabla(z^{\mo-1}\nabla (-\Delta)^{-\so} z^{\no-1}), \quad \tag{\ref{ModelGen}}
\]
which contains in particular the  models \eqref{Model2} and \eqref{Model3} by including two different kinds of diffusion exponents.  As far as we know, this model has not been studied before. Our goal is not to develop a complete theory of existence, uniqueness and regularity, but we want to show how the behavior of this equation is very related with the behavior of models \eqref{Model2} and \eqref{FPME}.

More specifically, we consider self-similar solutions of the form
\begin{equation}\label{selfsol4}
z(x,t)=t^{-N\beta_4}\phi_4(x t^{-\beta_4})
\end{equation}
where we have used conservation of mass as before. When considering self-similar solutions of the form \eqref{selfsol4}, the two terms of equation \eqref{ModelGen} in variable $y=xt^{-\beta_4}$ are as follows:
\[
z_t(x,t)=-\beta_4 \nabla_y\cdot(y \phi(y))t^{-N\beta_4-1},
\]
\[
\nabla_x\cdot(z^{\mo-1}\nabla_x (-\Delta)^{-\so} z^{\no-1})=\nabla_y \cdot (\phi_4^{\mo-1} \nabla_y (-\Delta)^{-\so} \phi_4^{\no-1} )t^{-\beta_4(N(\mo+\no-2)+2-2s)}.
\]

Let $\beta_4=(N(\mo+\no-3)+2-2\so)^{-1}$. The the profile $\phi_4$ is a solution to the equation
\begin{equation}\label{prof4}
\nabla \cdot(\phi_4^{\mo-1}\nabla (-\Delta)^{-\so} \phi_4^{\no-1})=-\beta_4 \nabla\cdot(y \phi_4).
\end{equation}

In the following theorem , we will prove that the self-similar solutions of this class are fully characterized when $\no >1$ by the self-similar solutions of models \eqref{FPME} and \eqref{Model2} presented in the previous sections.

\begin{teor}\label{ThmModelGen}
Let $\phi_4$ be the solution of the profile equation \eqref{prof4}
\[
\nabla \cdot(\phi_4^{\mo-1}\nabla (-\Delta)^{-\so} \phi_4^{\no-1})=-\beta_4 \nabla\cdot(y \phi_4).
\]
\begin{enumerate}
\item Let $\no>1$,$\mo<2$ and $\phi_1$ the solution to
\[\laps \phi_1^m=\beta_1 \nabla \cdot(y \phi_1),\]
for $m=\frac{\no-1}{2-\mo}$ and $s=1-\so$. Then $\phi_1=\left(\frac{\beta_1}{\beta_4}\right)^{\frac{1}{m-1}} \phi_4^{2-\mo}$ .
\item Let $\no>1$,$\mo \geq2$ and $\phi_2$ the solution to
\[\nabla \cdot(\phi_2^{\m-1}\nabla(-\Delta)^{-\s}\phi_2 )=-\beta_2 \nabla \cdot(y \phi_2),\]
for $\m=\frac{2\no+\mo-4}{\no-1}.$ and $\s=\so$. Then $\phi_2=\left(\frac{\beta_2}{\beta_4}\right)^{\frac{1}{m-1}} \phi_4^{\no-1}$.
\end{enumerate}
\end{teor}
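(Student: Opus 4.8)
The plan is to mimic exactly the argument used in the proof of Theorem \ref{Thm1}, reducing the profile equation \eqref{prof4} to a pointwise vector identity and then absorbing the nonlinearities by a pure power substitution. First I would rewrite \eqref{prof4} by noting that both sides are divergences: integrating once and invoking the decay of $\phi_4$ at infinity (so that no constant of integration survives), one obtains the vector identity
\[
\phi_4^{\mo-1}\,\nabla (-\Delta)^{-\so}\phi_4^{\no-1} = -\beta_4\, y\, \phi_4, \qquad y\in\RN.
\]
Dividing by $\phi_4$ (on the support, where $\phi_4>0$) turns this into
\[
\phi_4^{\mo-2}\,\nabla (-\Delta)^{-\so}\phi_4^{\no-1} = -\beta_4\, y,
\]
which is the clean form on which all three cases rest. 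The harmless point to check here is that $\mo<2$ (resp. $\mo\ge2$) is precisely the sign condition making the subsequent exponent manipulations consistent, but the identity above holds regardless.

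For case (1), the idea is to recognize the left-hand side as $\nabla$ of a power of $\phi_4$. Set $g=\phi_4^{2-\mo}$; then $\phi_4^{\mo-2}\nabla(\cdot)$ suggests writing $\phi_4^{\no-1}$ as a power of $g$. Since we want to land on the \eqref{FPME} profile equation $\laps\phi_1^m=\beta_1\nabla\cdot(y\phi_1)$ with $s=1-\so$ — equivalently, after one integration, $\nabla(-\Delta)^{-(1-\so)}(-\Delta)\phi_1^m = \ldots$, i.e. $-\nabla(-\Delta)^{\so}\phi_1^m=\ldots$, but it is cleaner to run the correspondence backwards exactly as in Theorem \ref{Thm1}(1): express \eqref{prof1} in the $\phi_4$ variables. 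With $m=\tfrac{\no-1}{2-\mo}$ and $\phi_1=(\beta_1/\beta_4)^{1/(m-1)}\phi_4^{2-\mo}$ one has $\phi_1^m = (\beta_1/\beta_4)^{m/(m-1)}\phi_4^{m(2-\mo)} = (\beta_1/\beta_4)^{m/(m-1)}\phi_4^{\no-1}$, so the operator $\laps=(-\Delta)^{1-\so}=-\Delta(-\Delta)^{-\so}$ applied to $\phi_1^m$ reproduces, up to the scalar factor, the term $-\Delta(-\Delta)^{-\so}\phi_4^{\no-1}$. Matching this against $\beta_1\nabla\cdot(y\phi_1)\sim\nabla\cdot(y\phi_4^{2-\mo})$ and using the vector identity above to replace $\nabla(-\Delta)^{-\so}\phi_4^{\no-1}$ by $-\beta_4 y\phi_4^{2-\mo}$, the constants are forced to be $(\beta_1/\beta_4)^{1/(m-1)}$ exactly — this is where the claimed formula is \emph{derived} rather than guessed, and it is the one genuinely bookkeeping-heavy step. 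Regularity of $\phi_1$ (hence legitimacy of applying $\Delta$) follows a posteriori from the known regularity of $\phi_4$ and the explicit power relation, exactly as in the remark after Theorem \ref{Thm1}; absent that, $\phi_1$ is still a weak solution.

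For case (2), the target is the Model 2 profile equation \eqref{prof2}, whose integrated vector form is $\phi_2^{\m-1}\nabla(-\Delta)^{-\s}\phi_2=-\beta_2 y\phi_2$, i.e. $\phi_2^{\m-2}\nabla(-\Delta)^{-\s}\phi_2=-\beta_2 y$. Here I would set $\phi_2=(\beta_2/\beta_4)^{1/(m-1)}\phi_4^{\no-1}$ with $\s=\so$, so that $(-\Delta)^{-\s}\phi_2$ is a scalar multiple of $(-\Delta)^{-\so}\phi_4^{\no-1}$ and its gradient is, by the vector identity, a multiple of $y\phi_4^{\mo-1}/\phi_4 = y\phi_4^{\mo-2}$. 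It remains to check that $\phi_2^{\m-2}$, a power of $\phi_4^{\no-1}$, matches $\phi_4^{\mo-2}$ up to the right constant: $(\no-1)(\m-2)=\mo-2$ forces $\m = \tfrac{\mo-2}{\no-1}+2 = \tfrac{2\no+\mo-4}{\no-1}$, precisely the stated value, and the scalar works out to $(\beta_2/\beta_4)^{1/(m-1)}$ once more. Multiplying back by $\phi_2$ and taking divergence recovers \eqref{prof2}.

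The main obstacle is not conceptual but a matter of keeping the three exponent relations and the two integration-constant normalizations straight simultaneously, and — more substantively — justifying the passage from the divergence-form equation \eqref{prof4} to the pointwise vector identity. This requires $\phi_4$ (and the relevant nonlinear powers and their fractional potentials) to decay fast enough at infinity that the integrated equation has no harmonic remainder; I would handle this by the same argument invoked in Theorem \ref{Thm1}, namely that the correspondence formulas combined with the known decay of $\phi_1$ and $\phi_2$ (Theorem \ref{ThJLVProfileDecay} and its Model 2 corollary) give a posteriori the needed decay of $\phi_4 = (\beta_4/\beta_1)^{1/(m-1)\,\cdot\,1/(2-\mo)}\phi_1^{1/m}$, closing the loop. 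The case dichotomy $\mo<2$ versus $\mo\ge2$ is exactly the split between $m>0$ landing in the FPME range versus $\m\ge2$ landing in the Model 2 range, so no separate borderline analysis is needed beyond excluding $\mo=2$ from case (1).
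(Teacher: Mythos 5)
Your proposal is correct and follows essentially the same route as the paper: pass from the divergence-form profile equations to their integrated (vectorial) versions, and verify that the stated power substitutions with the given exponent relations $m=\frac{\no-1}{2-\mo}$, $s=1-\so$ (resp. $\m=\frac{2\no+\mo-4}{\no-1}$, $\s=\so$) turn the $\phi_1$ (resp. $\phi_2$) vector identity into $\nabla(-\Delta)^{-\so}\phi_4^{\no-1}=-\beta_4\,y\,\phi_4^{2-\mo}$. Your additional care about justifying the integration step via decay at infinity and the a posteriori regularity argument matches the treatment the paper gives in Theorem \ref{Thm1} and simply makes explicit what the paper leaves implicit here.
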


\begin{proof}
As before, we consider the vectorial expression that can be directly deduced from $\eqref{prof4}$
\begin{equation}\label{prof4vec}
\nabla (-\Delta)^{-\so} \phi_4^{\no-1}=-\beta_4 y \phi_4^{2-\mo}.
\end{equation}
Consider also the corresponding vectorial profile equation
\[\nabla (-\Delta)^{s-1} \phi_1^m=\beta_1y \phi_1.\]
Let $\no>1$ and $\mo<2$. The profile $\phi_1=\left(\frac{\beta_1}{\beta_4}\right)^{\frac{1}{m-1}} \phi_4^{2-\mo}$ for $m=\frac{\no-1}{2-\mo}$ and $s=1-\so$ transform the last profile equation in to \eqref{prof4vec}.

In the same way, for $\no >1$ and $\mo \geq2$ the vectorial profile
\[
\nabla(-\Delta)^{-\s}\phi_2 =-\beta_2 y \phi_2^{2-\m}
\]
turns into \eqref{prof4vec} if $\phi_2=\left(\frac{\beta_2}{\beta_4}\right)^{\frac{1}{m-1}} \phi_4^{\no-1}$ for $\m=\frac{2\no+\mo-4}{\no-1}$ and $\s=\so$. We summarize the results of this theorem in Figure \ref{Figrelation4}.
\end{proof}

\begin{figure}[h!]
	\begin{center}
		\includegraphics[width=\textwidth]{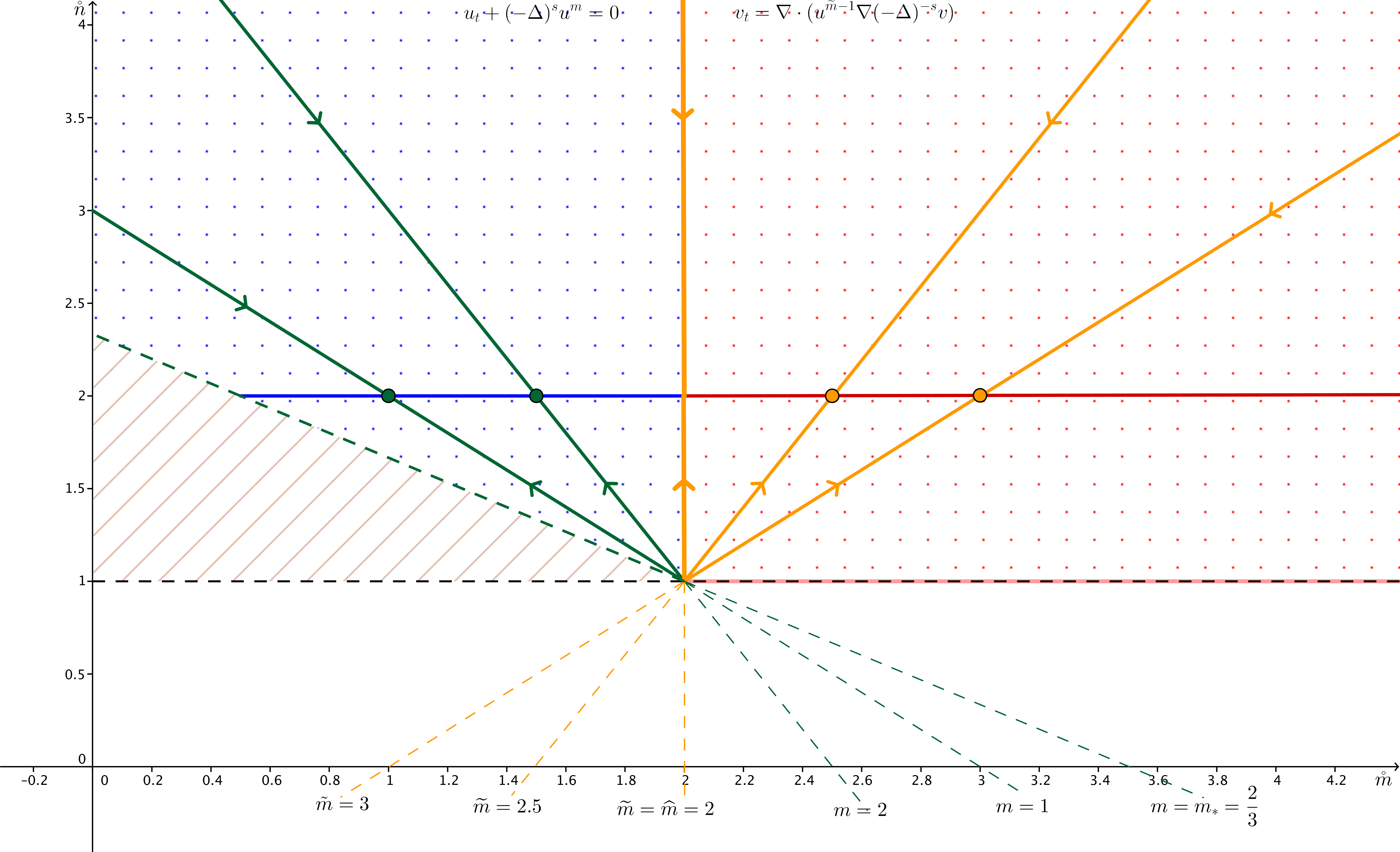}
		\caption{\footnotesize{ Related profiles of \eqref{ModelGen} with \eqref{FPME} and \eqref{Model2} for N=3 and $s=\frac{1}{2}$.
 We summarize the results on \eqref{ModelGen} as follows. The horizontal axis is the $\mo$ variable, the vertical axis is the $\no$ variable. (a) The $\mo<2$ left part describes the connection between \eqref{ModelGen} and \eqref{FPME} according to Theorem \ref{ThmModelGen}-1: each green line is defined by $\no=m(2-\mo)+1$ for particular values of $m$ that are mentioned at the lower extremity of the line. The profile function $\phi_4$ of \eqref{ModelGen}-$(\mo,\no)$ is obtained from the profile $\phi_1$ of \eqref{FPME}-($m$) for every $(\mo,\no)$ on the corresponding green line. As a consequence, $\phi_4$ for every $(\mo,\no)$ on the green line can be obtained from $\phi_4$ for $(2-1/m, 2)$, the point on the line with $\no=2$. This reduction to the case $\no=2$ is suggested by the arrows converging to the point $(2-1/m, 2)$. (b) Similarly the $\mo\ge 2$ right part describes the connection between \eqref{ModelGen} and \eqref{Model2} according to Theorem \ref{ThmModelGen}-2. Each orange line is defined by $\no(\mo-2)=\mo-4+\m$ for particular values of $\m$. The profile $\phi_4$ of \eqref{ModelGen}$-(\mo,\no)$ is obtained in this case from the $\phi_2$ of \eqref{Model2}$-(\m)$. As a consequence, $\phi_4$ of \eqref{ModelGen}$-(\mo,\no)$ is reduced to $\phi_4$ of \eqref{ModelGen}-$(\m,2)$ which is the point corresponding to $\no=2$ the the respective orange line. (c) The vertical line $\mo=2$ corresponds \eqref{Model3} studied in \cite{Biler1}. The horizontal line $\no=2$ corresponds to \eqref{Model2} studied in \cite{StanTesoVazquezCRAS}: blue for infinite propagation, while red is for finite propagation.}}
        		\label{Figrelation4}
	\end{center}
\end{figure}

\begin{note}
We should note the $N=3$ and $s=\frac{1}{2}$ plays no special role in the figure. The only determinate the shape of the line correspondent to $m=m_*=\frac{N-2s}{N}=\frac{2}{3}$.
\end{note}
\begin{note}
Notice that the relation $\phi_1=\left(\frac{\beta_1}{\beta_4}\right)^{\frac{1}{m-1}} \phi_4^{2-\mo}$ for $\no>1$ and $\mo<2$ is also true in some sense if $\no<1$ and $\mo\geq2$. It is not very clear the meaning of model \eqref{ModelGen} for $\no<1$, but this relation strongly changes the behavior of the profile equation. For example, decay at infinity does not hold anymore.

The same happens for $\phi_2=\left(\frac{\beta_2}{\beta_4}\right)^{\frac{1}{m-1}} \phi_4^{\no-1}$ if we are in the range $\no<1$ and $\mo<1$.
\end{note}


\section{Very singular solutions for \eqref{Model2}}\label{SectionVSSm2}

In this section we investigate another important class of solutions for the equation \eqref{Model2}  in a certain range of parameters $\widetilde m$ corresponding to Fractional Fast Diffusion Equations.  These solutions are called Very Singular Solutions (VSS).

\subsection{Very singular solutions of type I for \eqref{Model2}}

Solutions of this type are obtained in \cite{VazquezBarenblattFractPME}  for model M1  by the method of separation of variables. The same argument could be done here for (M2) , but instead of it, we will make use of the profile equation (\ref{prof2}) that is very familiar for us at this point. We will look for solutions of the form $\phi(y)=C|y|^{-\alpha}$ to the corresponding profile equation.
We recall that the profile equation for model (\ref{Model2}) is
\[
\nabla \cdot(\phi^{\m-1}\nabla(-\Delta)^{-\s}\phi )=-\beta_2 \nabla \cdot(y \phi). \tag{\ref{prof2}}
\]
In the sequel we will denote $y=y_i$ to clarify the vectorial notation required. The profile equation implies that
\[\phi^{\m-1}\frac{\partial}{\partial y_i}(-\Delta)^{-\s}\phi =-\beta_2y_i \phi.
\]
In Appendix 1, we review the fact that $(-\Delta)^{-\s}(|y|^{-\alpha})=k(\alpha) |y|^{-\alpha+2\s}$ where $k(\alpha)$ is an explicit constant. In this way,
\[
C^m k(\alpha) |y|^{-\alpha(\m-1)} (-\alpha+2\s) y_i|y|^{-\alpha+2\s-2}=-C\beta_2y_i|y|^{-\alpha} ,
\]
which gives the following simplified equation for the solution
\[C^{m-1}|y|^{-\alpha \m+2\s-2}=-\frac{\beta_2}{k(\alpha)(-\alpha+2\s)}|y|^{-\alpha}.\]
Therefore, $\phi$ is a solution to equation \eqref{prof2} if  the following equalities hold true
\[
-\alpha \m+2\s-2=-\alpha ,\quad C^{\m-1}=-\frac{\beta_2}{k(\alpha)(-\alpha+2\s)}.
\]
These conditions determine the exact values of the coefficient $C$ and the exponent $\alpha$:
\[
\alpha=\frac{2-2\s}{1-\m} \qquad \mbox{and} \qquad C^{1-\m}=2\overline{k}(\alpha) \frac{1-\s(2-\m)}{(1-\m)\beta_2}.
\]
where $k(\alpha)$ is calculated explicitly in appendix 1. In this way, the condition of existence of VSS for model  (\ref{Model2}) is the existence of such a constant $C$. We need to show the range of $\m$ that ensures

\begin{equation}\label{C0}
2^{1-2s}\frac{\Gamma\left(\frac{N-\alpha}{2}\right) \Gamma\left(\frac{\alpha-2\s}{2}\right)}
{\Gamma\left(\frac{\alpha}{2}\right)  \Gamma\left(\frac{N-\alpha+2\s}{2}\right)}\cdot\frac{1-\s(2-\m)}{(1-\m)\beta_2}>0.
\end{equation}
The following to properties of the $\Gamma$-functions are used
\[
\Gamma(x)>0 \mbox{ if } x>0. \qquad \Gamma(x)<0 \mbox{ if } x\in (-1,0).
\]
The first condition is $\m<1$ to make $\alpha>0$. In this way, $\Gamma(\alpha/2)>0$. We also need $\beta_2>0$ to ensure the decay at infinity of the VSS. This restriction implies the first condition on $\m$,
\begin{equation}\label{C1}
\m>\frac{N-2+2\s}{N}.
\end{equation}
We observe that for
\begin{equation}\label{C2}
\m>\frac{N+2\s}{N+2}
\end{equation}
we get $\frac{N-\alpha}{2}=-\frac{1}{2(1-\m)\beta_2}>-1$ and so on $\Gamma\left(\frac{N-\alpha}{2} \right)\frac{1}{\beta_2}<0$. It is also easy to check that $\frac{\alpha-2\s}{2}=\frac{1-\s(2-\m)}{(1-\m)}>-1$, which implies
\[\Gamma\left(\frac{\alpha-2\s}{2}\right)\frac{1-\s(2-\m)}{(1-\m)}>0 .
\]
At this point, we must have $ \Gamma\left(\frac{N-\alpha+2\s}{2}\right)<0$ to make \eqref{C1} hold. Se we need
\[
-1<\frac{N-\alpha+2\s}{2}<0\,,
\]
which gives the following two conditions on $\m$:
\begin{equation}\label{C3}
\m>\frac{N-2+4\s}{N+2\s},
\qquad
\m<\frac{N+4\s}{N+2+2\s}\,.
\end{equation}
We get the existence result by choosing the more restrictive conditions between \eqref{C1},  \eqref{C2} and  \eqref{C3}.

\begin{teor}
There exists a Very Singular Solution to model \eqref{Model2} of the form
\[v(x,t)=K t^{-\frac{1}{1-\mh}}|x|^{-\frac{2-2\s}{1-\mh}}\]
for all $\m$ in the interval $\left(\frac{N-2+4\s}{N+2\s}, \frac{N+2\s}{N+2}\right)$.
\end{teor}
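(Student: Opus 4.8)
\noindent\emph{Proof plan.} The idea is to produce an explicit power profile $\phi_2(y)=C|y|^{-\alpha}$ solving the profile equation \eqref{prof2}, and then read off $v$ from the type~I self-similar ansatz \eqref{FundSolM2type1}. First I would argue exactly as in the proof of Theorem~\ref{Thm1}: passing from the divergence form \eqref{prof2} to the integrated vector identity
\[
\phi_2^{\m-1}\,\nabla(-\Delta)^{-\s}\phi_2=-\beta_2\,y\,\phi_2,
\]
I substitute the ansatz and use the Riesz-potential identity $(-\Delta)^{-\s}(|y|^{-\alpha})=k(\alpha)\,|y|^{-\alpha+2\s}$ from Appendix~1 together with $\nabla|y|^{-p}=-p\,|y|^{-p-2}y$. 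Both sides then reduce to a scalar constant times $y_i$ times a power of $|y|$, so the identity holds if and only if the two homogeneities in $|y|$ agree and the two scalar constants agree.

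Matching the powers of $|y|$ gives $-\alpha\m+2\s-2=-\alpha$, hence $\alpha=\frac{2-2\s}{1-\m}$; matching the constants gives a single scalar equation determining $C^{1-\m}$ explicitly in terms of $k(\alpha)$, $\beta_2$, $\s$ and $\m$. Once $\alpha$ and $C$ are fixed, the function $v(x,t)=t^{-\alpha_2}\phi_2(xt^{-\beta_2})$, with $\alpha_2=N\beta_2$ and $\beta_2=(N(\m-1)+2-2\s)^{-1}$, is by construction a self-similar solution of \eqref{Model2} of exactly the form displayed in the statement. Thus the whole theorem reduces to one point: the scalar $C^{1-\m}$ must be a strictly positive real number, so that $C>0$ is well defined.

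To settle this I would insert the explicit value of $k(\alpha)$ from Appendix~1; positivity of $C^{1-\m}$ is then precisely the inequality \eqref{C0}, a product of four $\Gamma$-factors times the rational factor $\frac{1-\s(2-\m)}{(1-\m)\beta_2}$. The only tool needed is that $\Gamma(x)>0$ for $x>0$ and $\Gamma(x)<0$ for $x\in(-1,0)$. Proceeding factor by factor: $\m<1$ makes $\alpha>0$, hence $\Gamma(\alpha/2)>0$; decay of the profile at infinity requires $\beta_2>0$, i.e.\ \eqref{C1}, $\m>\frac{N-2+2\s}{N}$; then, using $\frac{N-\alpha}{2}=-\frac{1}{2(1-\m)\beta_2}$ and $\frac{\alpha-2\s}{2}=\frac{1-\s(2-\m)}{1-\m}$, the requirements $-1<\frac{N-\alpha}{2}<0$ and $-1<\frac{N-\alpha+2\s}{2}<0$ fix the signs of the three remaining $\Gamma$-factors so that, together with the rational factor, the product in \eqref{C0} comes out positive; these requirements translate into \eqref{C2}, $\m<\frac{N+2\s}{N+2}$, and \eqref{C3}, $\frac{N-2+4\s}{N+2\s}<\m<\frac{N+4\s}{N+2+2\s}$.

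Finally I would intersect all the constraints on $\m$. Among the lower bounds, $\frac{N-2+4\s}{N+2\s}$ is the more restrictive (its difference with $\frac{N-2+2\s}{N}$ has the sign of $\s(1-\s)>0$); among the upper bounds, $\frac{N+2\s}{N+2}$ is the smallest of $1$, $\frac{N+2\s}{N+2}$, $\frac{N+4\s}{N+2+2\s}$ (the relevant differences have the signs of $(1-\s)>0$ and $-\s(1-\s)<0$). Hence the admissible range is exactly $\left(\frac{N-2+4\s}{N+2\s},\,\frac{N+2\s}{N+2}\right)$, and $K$ is the resulting explicit constant. I expect the main obstacle to be precisely this sign bookkeeping: correctly locating each $\Gamma$-argument in $(-1,0)$ versus $(0,\infty)$ and checking that the five sign factors multiply to a positive number on, and only on, that interval. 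A secondary point worth a line is justifying the formal step of integrating the divergence once — which uses the decay of $\phi_2$ at infinity — i.e.\ that the explicit power profile really is a weak solution of \eqref{prof2}, not merely a formal one.
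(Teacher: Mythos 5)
Your proposal follows essentially the same route as the paper: substitute the power ansatz $C|y|^{-\alpha}$ into the integrated vector form of \eqref{prof2}, match homogeneities to obtain $\alpha=(2-2\s)/(1-\m)$, determine the admissible range of $\m$ from positivity of $C^{1-\m}$ via the signs of the $\Gamma$-factors in \eqref{C0}, intersect the constraints \eqref{C1}, \eqref{C2}, \eqref{C3}, and finally rewrite the type~I self-similar form as the displayed power of $t$ times $|x|^{-(2-2\s)/(1-\m)}$. The only divergence is cosmetic and in your favor: you state \eqref{C2} in the direction the computation actually yields, $\m<\frac{N+2\s}{N+2}$ (consistent with the theorem's upper bound), whereas the paper's displayed \eqref{C2} has the inequality reversed.
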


\begin{proof}
We have obtained a solution $v(x,t)= t^{-N\beta_2} \phi(xt^{-\beta_2})$  for $\phi(y)=C|y|^{-\frac{2-2\s}{1-\m}}$ and exponent $\beta_2=(N(\m-1)+2-2\s)^{-1}$. The form of the solution $v$ is as follows
\[\displaystyle
v(x,t)=Kt^{\beta_2\left(\frac{2-2\s}{1-\m}-N\right) } |x|^{-\frac{2-2\s}{1-\m}},
\]
with a suitable constant $K=K(N,\s,\m)$. Since $\beta_2\left(\frac{2-2\s}{1-\m}-N\right)=-\frac{1}{1-\m}$, then $v(x,t)$ can be written in a simpler form:
\begin{equation}
v(x,t)=Kt^{-\frac{1}{1-\m}}|x|^{-\frac{2-2\s}{1-\m}}.
\end{equation}
\end{proof}
Note that these VSS are algebraically the same that the ones obtained by separation of variables in (\cite{VazquezBarenblattFractPME}) for equation (\ref{FPME}) if $\m=m$ and $\s=1-s$, except for the constant $K$.

\subsection{Very singular solutions with extinction for \eqref{Model2} }

We search for solutions of the form

$$U(x,t)=B(t)|x|^{-\alpha}$$
of equation \eqref{Model2}.
They have to satisfy the relation
$$B'(t)|x|^{-\alpha}= B(t)^{m} \mathcal{C} |x|^{-\alpha \m+2\s -2}$$
where
$$\mathcal{C}=\overline{k}(\alpha)(-\alpha+2\s)(-\alpha \m + 2\s-2+N)$$
(see the computations in the previous case).
Therefore
$$\alpha=\frac{2-2s}{1-m}$$
and
\begin{equation}\label{Bt}
B'(t)=\mathcal{C} B(t)^m.\end{equation}
We study now the sign of the constant $\mathcal{C}$:
$$\mathcal{C}=2^{1-2s}\frac{\Gamma\left(\frac{N-\alpha}{2}\right) \Gamma\left(\frac{\alpha-2\s}{2}\right)}
{\Gamma\left(\frac{\alpha}{2}\right)  \Gamma\left(\frac{N-\alpha+2\s}{2}\right)} \cdot \frac{-\alpha+2\s}{2} \cdot \frac{-1}{\beta_2(1-m)},$$
where $\beta_2^{-1}=N(m-1)+2-2\s$.

We consider now the case $\beta_2<0$ that is $\m<\frac{N-2+2\s}{N}$ and then we obtain $\alpha<N$.
Therefore all of Gamma functions above are positive, but for $\Gamma\left(\frac{\alpha-2\s}{2}\right)$ whose sign we do not estimate. We observe that
$$\Gamma\left(\frac{\alpha-2\s}{2}\right)\cdot \frac{-\alpha+2\s}{2} = - \Gamma\left(\frac{\alpha-2\s}{2}+1\right).$$
Since $\frac{\alpha-2\s}{2}+1 = (\alpha+2-2\s)/2>0$ then $\Gamma\left(\frac{\alpha-2\s}{2}+1\right)>0.$
We obtain therefore that $\mathcal{C}<0.$
Then, solving equation \eqref{Bt} with $B(T)=0$, we obtain the following formula for $B(t)$:
$$B(t)= \left(-\mathcal{C}(1-m)\right)^{1/(1-m)} (T-t)^{1/(1-m)}.$$

\begin{teor}
Let $\s \in (0,1)$, $0<\m<\frac{N-2+2\s}{N}$. Then there exists a Very Singular Solution to model \eqref{Model2} of the form
\[
v(x,t)=K (T-t)^{1/(1-m)}|x|^{-\frac{2-2\s}{1-\mh}}\quad  \text{for }t\in [0,T]
\]
and $v(x,t)=0$ for $t>T$.

\end{teor}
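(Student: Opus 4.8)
The plan is to mirror almost verbatim the computation carried out in the preceding subsection for the VSS of type I, so the proof will be short. First I would record what has already been established in the body of the section leading up to the statement: looking for $U(x,t)=B(t)|x|^{-\alpha}$ in \eqref{Model2} and using the identity $(-\Delta)^{-\s}(|y|^{-\alpha})=\overline{k}(\alpha)|y|^{-\alpha+2\s}$ from Appendix 1, one is forced into the relation $B'(t)|x|^{-\alpha}=\mathcal{C}\,B(t)^m|x|^{-\alpha\m+2\s-2}$ with $\mathcal{C}=\overline{k}(\alpha)(-\alpha+2\s)(-\alpha\m+2\s-2+N)$. Matching the powers of $|x|$ gives $\alpha=(2-2\s)/(1-\m)$ exactly as before (here $m=\m$ in the notation, the tilde being suppressed), and separating variables leaves the ODE $B'(t)=\mathcal{C}\,B(t)^m$.

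The second step is the sign analysis of $\mathcal{C}$, which is where the hypothesis $0<\m<(N-2+2\s)/N$ enters. In this range $\beta_2^{-1}=N(\m-1)+2-2\s<0$, hence $\alpha=(2-2\s)/(1-\m)<N$, so in the Gamma-quotient expression for $\overline{k}(\alpha)$ the arguments $(N-\alpha)/2$, $(\alpha)/2$ and $(N-\alpha+2\s)/2$ are all positive and those three Gamma factors are positive. The only delicate factor is $\Gamma((\alpha-2\s)/2)$, whose argument may be negative; but combining it with the factor $(-\alpha+2\s)/2$ via the functional equation $x\Gamma(x)=\Gamma(x+1)$ one gets $\Gamma((\alpha-2\s)/2)\cdot\frac{-\alpha+2\s}{2}=-\Gamma((\alpha+2-2\s)/2)$, and since $(\alpha+2-2\s)/2>0$ this is negative. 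Putting together the signs, together with the remaining factor $-1/(\beta_2(1-\m))$ which is positive (product of two negatives: $\beta_2<0$ and $1-\m>0$ give $\beta_2(1-\m)<0$, so $-1/(\beta_2(1-\m))>0$), one concludes $\mathcal{C}<0$. One should also double-check that the third factor $(-\alpha\m+2\s-2+N)$ inside $\mathcal{C}$ does not vanish and has been accounted for; in fact after the power-matching $-\alpha\m+2\s-2=-\alpha$, so $-\alpha\m+2\s-2+N=N-\alpha>0$, consistent with the above.

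The third and final step is to solve the scalar ODE $B'=\mathcal{C}B^m$ with the terminal condition $B(T)=0$, which is exactly the extinction profile: since $\mathcal{C}<0$ and $m<1$, integrating $B^{-m}\,dB=\mathcal{C}\,dt$ gives $B(t)=\big(-\mathcal{C}(1-\m)\big)^{1/(1-\m)}(T-t)^{1/(1-\m)}$ for $t\le T$, and $B\equiv0$ for $t>T$ is a legitimate weak continuation. Substituting back yields
\[
v(x,t)=K\,(T-t)^{1/(1-\m)}\,|x|^{-\frac{2-2\s}{1-\m}},\qquad t\in[0,T],
\]
with $K=\big(-\mathcal{C}(1-\m)\big)^{1/(1-\m)}$, and $v(x,t)=0$ for $t>T$, which is the asserted solution. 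I do not anticipate any real obstacle: the only point requiring care is the bookkeeping of signs of the Gamma factors in Step 2, and one should remark (as elsewhere in the paper) that this is a weak/pointwise computation, so no regularity issue arises because the profile is singular only at the origin and the manipulations are justified away from $x=0$.
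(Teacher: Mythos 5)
Your proposal is correct and follows essentially the same route as the paper's own argument: the separated ansatz $U=B(t)|x|^{-\alpha}$, the identity $(-\Delta)^{-\s}|y|^{-\alpha}=\overline{k}(\alpha)|y|^{-\alpha+2\s}$, the power matching giving $\alpha=(2-2\s)/(1-\m)$, the sign analysis of $\mathcal{C}$ via $x\Gamma(x)=\Gamma(x+1)$ under the hypothesis $\beta_2<0$, and the terminal-value ODE $B'=\mathcal{C}B^{\m}$, $B(T)=0$. Your added consistency check that $-\alpha\m+2\s-2+N=N-\alpha=-1/(\beta_2(1-\m))>0$ is exactly the factor the paper carries in its expression for $\mathcal{C}$, so nothing is missing.
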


Note that these VSS are algebraically the same that the ones obtained by Vázquez and Volzone for \eqref{FPME} in \cite{VazquezVolzone2013Optimal} if $\m=m$ and $\s=1-s$, except for the constant $K$.


\section{Apendix 1: Inverse fractional Laplacians and Potentials }

The definition of $(-\Delta)^w $ is also done by means of Fourier transform
\begin{equation*}
\FT((-\Delta)^s f)(\xi)=(2\pi|\xi|)^{2s} \FT(f)(\xi),
\end{equation*}
and can be use even for negative values of $s$. In the range $N/2<s<0$ we have an equivalent definition in terms of a Riesz potential
\begin{equation*}
(-\Delta)^{-s} f(x)=I_s(f)=\gamma(s)^{-1}\int_{\mathbb{R}^N}\frac{f(y)}{|x-y|^{N-2s}}dy,
\end{equation*}
acting on functions of the class $\mathcal{S}$. The function $\gamma$ is defined by
\begin{equation*}
\gamma(\rho)=\pi^{N/2} \, 2^{\rho}\frac{ \Gamma(\rho/2)}{\Gamma((N-\rho)/2)}.
\end{equation*}

Note that $\gamma(\rho)\to \infty$ as $\rho \to N$, but  $\gamma(s)/(N-\rho)$ converge to the nonzero constant $\pi^{N/2}2^{N-1} \Gamma(N/2)$.

It is well known that the Fourier Transform of the function $f_\alpha(x)=|x|^{-\alpha}$ is
\[\FT(f_\alpha)(\xi)=\gamma(N-\alpha)(2\pi)^{\alpha-N}|\xi|^{\alpha-N}.\]
In this way, we can compute $(-\Delta)^{-s} f_\alpha(x)$ as follows,
\begin{eqnarray*}
\FT((-\Delta)^{-s} f_\alpha)(\xi)&=&(2\pi|\xi|)^{-2s}\FT(f_\alpha)(\xi)= \gamma(N-\alpha)(2\pi)^{\alpha-N-2s} |\xi|^{\alpha-N-2s}\\
&=&\frac{\gamma(N-\alpha)}{\gamma(N-\alpha+2s)}\gamma(N-\alpha+2s) (2\pi)^{\alpha-N-2s} |\xi|^{\alpha-N-2s}\\
&=&\frac{\gamma(N-\alpha)}{\gamma(N-\alpha+2s)} \FT(f_{\alpha-2s})(\xi),
\end{eqnarray*}
that is
\begin{equation*}
(-\Delta)^{-s} f_\alpha(x)=\overline{k}(\alpha) f_{\alpha-2s}(x), \qquad \overline{k}(\alpha)=\frac{\gamma(N-\alpha)}{\gamma(N-\alpha+2s)}.
\end{equation*}
More exactly
$$
\overline{k}(\alpha)=2^{-2s}\frac{\Gamma((N-\alpha)/2)\, \Gamma\left((\alpha-2s)/2\right)}
{\Gamma(\alpha/2)\,  \Gamma((N-\alpha+2s)/2)}.
$$

\

\section*{Comments and open problems}

The following questions appear naturally in view of the results of the paper.

\noindent $\bullet$ To decide if the asymptotic behavior of a general solution of \eqref{Model2} and \eqref{Model3} is given by a Barenblatt type solution. This fact is well known for \eqref{FPME} for general $m>(N-2s)_+/N$ (see \cite{VazquezBarenblattFractPME}) and for \eqref{Model2}, \eqref{Model3} with $\m=2$, $\mo=2$ (see \cite{CaffVaz2}).

\noindent $\bullet$ To find explicit or semi-explicit formulas for the Barenblatt profiles of models \eqref{FPME} and \eqref{Model2}.

\noindent $\bullet$ To find  explicit or semi-explicit solutions of any kind for model \eqref{Model2} with $\m>2$.

\noindent $\bullet$ Is it possible to find a transformation between general solutions of \eqref{FPME}, \eqref{Model2},  \eqref{Model3} and \eqref{ModelGen}?

\noindent $\bullet$ Develop a general theory for Model \eqref{ModelGen}.

\noindent $\bullet$ Develop numerical methods for models \eqref{Model2},  \eqref{Model3} and \eqref{ModelGen}.

\

\noindent {\large\bf Acknowledgments}

\noindent  Authors partially supported by the Spanish project MTM2011-24696. The second author is also supported by a FPU grant from MECD, Spain.
\medskip
\medskip

{\small

}

\

Addresses:\\
Diana Stan, {\tt diana.stan@uam.es}, \\F\'{e}lix del Teso, {\tt felix.delteso@uam.es},
\\and Juan Luis V{\'a}zquez, {\tt juanluis.vazquez@uam.es},\\
Departamento de Matem\'{a}ticas, Universidad
Aut\'{o}noma de Madrid, \\
Campus de Cantoblanco, 28049 Madrid, Spain

\end{document}